\DeclareMathOperator{\Pic}{Pic}
\DeclareMathOperator{\Hom}{Hom}
\DeclareMathOperator{\GL}{GL}
\DeclareMathOperator{\Tr}{Tr}
\newcommand{\bICM}[1]{\ICM_{#1}}
\DeclareMathOperator{\ICM}{ICM}
\DeclareMathOperator{\Wk}{\mathcal W}
\newcommand{\bWk}[1]{\Wk_{#1}}
\def\Q{\mathbb{Q}}
\def\Z{\mathbb{Z}}
\newcommand{\cA}{{\mathcal A}}
\newcommand{\cB}{{\mathcal B}}
\newcommand{\cC}{{\mathcal C}}
\newcommand{\cI}{{\mathcal I}}
\newcommand{\cL}{{\mathcal L}}
\newcommand{\cM}{{\mathcal M}}
\newcommand{\cO}{{\mathcal O}}
\newcommand{\cP}{{\mathcal P}}
\newcommand{\frf}{{\mathfrak f}}
\renewcommand{\frm}{{\mathfrak m}}
\newcommand{\p}{{\mathfrak p}}
\newcommand{\frP}{{\mathfrak P}}
\renewcommand{\bar}{\overline}
\newcommand{\vphi}{{\varphi}}
\newcommand{\isoclass}[1]{\{ #1 \}}
\newcommand{\wkclass}[1]{[ #1 ]}
\newcommand{\set}[1]{\left\lbrace#1\right\rbrace }
\newcommand{\Span}[1]{\left<#1\right>}
\newtheorem{thm}{Theorem}[section] 
\newtheorem{lemma}[thm]{Lemma}     
\newtheorem{cor}[thm]{Corollary}
\newtheorem{prop}[thm]{Proposition}
\newtheorem{df}[thm]{Definition}
\newtheorem{remark}[thm]{Remark}
\newtheorem{example}[thm]{Example}
\author{Stefano Marseglia}
\address{Matematiska institutionen, Stockholms universitet, Sweden}
\curraddr{Mathematical Institute, Utrecht University, The Netherlands}
\email{s.marseglia@uu.nl}
\title{Computing the ideal class monoid of an order}
\subjclass[2010]{
11R54  
11Y40  
(primary),
11C20  
15B36  
(secondary)}
\begin{document}
\maketitle

\let\thefootnote\relax\footnote{
This is the accepted version of the following article:
\emph{Marseglia, Stefano;
Computing the ideal class monoid of an order.
J. Lond. Math. Soc. (2) 101 (2020), no. 3, 984-1007}, which has been published in final form at
\url{http://dx.doi.org/10.1112/jlms.12294}
}
\vspace{-1cm}
\begin{abstract}
There are well known algorithms to compute the class group of the maximal order $\cO_K$ of a number field $K$ and the group of invertible ideal classes of a non-maximal order $R$.
In this paper we explain how to compute also the isomorphism classes of non-invertible ideals of an order $R$ in a finite product of number fields $K$. In particular we also extend the above-mentioned algorithms to this more general setting.
As an application, we use a generalization of a theorem of Latimer and MacDufee to produce algorithms that return representatives of all conjugacy classes of integral matrices with given characteristic polynomial (satisfying certain assumptions) and solve the conjugacy problem for such matrices.
\end{abstract}


\section{Introduction}


Let $K$ be a number field and $R$ an order in $K$.
There are well known algorithms to compute the ideal class group $\Pic(R)$ when $R$ is the ring of integers $\cO_K$ of $K$, also known as the maximal order, see for example \cite{cohen93}.
This information can be used to efficiently compute the group $\Pic(R)$ of invertible ideal classes of a non-maximal order $R$,
as is explained in \cite{klupau05}.

On the other hand not much is known about non-invertible ideals and, in particular, it is not known how to compute the monoid of all ideal classes of $R$, which we will denote $\ICM(R)$.
In the literature one can find results about the local isomorphism classes of ideals.
More precisely, one studies the genus of an ideal, which is its isomorphism class after localizing at a rational prime $p$, or its weak equivalence class, which is its isomorphism class after localizing at a prime ideal $\p$ of $R$.
For the notion of genus we refer to  \cite{Reiner70} and \cite{Reiner03}, while for results about the weak equivalence classes we cite \cite{dadetz62}.
It is important to mention that these two apparently different notions are actually equivalent, as pointed out in \cite[Section 5]{LevyWiegand85}.

In the present paper we exhibit:
\begin{itemize}
    \item an algorithm to compute the monoid $\ICM(R)$ of isomorphism classes of fractional ideals of an order $R$ in a finite product of number fields $K$, see Theorem \ref{thm:icmcomputation}, Proposition \ref{prop:wkbar} and the algorithms in Section \ref{sec:algorithms}, and
    \item a bijection between the set of conjugacy classes of integral matrices with given square-free minimal polynomial $m$ and characteristic polynomial $c$ and the set of $R$-isomorphism classes of $\Z$-lattices in a certain $\Q$-algebra, where $R$ is an order in a certain product of number fields, see Theorem \ref{thm:matrixconj}.
    Under certain assumptions on the polynomials $c$ and $m$, we can reduce such a description to an ideal class monoid computation and hence solve the conjugacy problem and produce representatives of each conjugacy class, see Corollary \ref{cor:computeconjclass}.
\end{itemize}
Theorem \ref{thm:matrixconj} is a generalization of the main result of \cite{LaClMD33} where the case when $c$ is square-free is analyzed.
Their theorem was then reproved with a different method under the extra assumption that $c$ is irreducible in \cite{taussky49}.
The author recently discovered that Theorem \ref{thm:matrixconj} has independently been proved in \cite{HusertPhDThesis17} in greater generality.
Moreover, a new algorithm to test whether two rational matrices are conjugate over $\Z$ is given in \cite{EickHofmannOBrien19}.
This algorithm works in more generality than ours at the cost of being slower.

The present paper is structured as follows. 
In Section \ref{sec:orders} we recall the definitions of an order $R$ and a fractional ideal in a product of number fields $K$ and some basic results, which are well known in the case that $K$ is a number field.
In Section \ref{sec:idealclasses} we introduce isomorphisms of fractional ideals, and the monoid that the corresponding classes form, called the ideal class monoid $\ICM(R)$.
Since it is hard to compute $\ICM(R)$ directly, in Section \ref{sec:wkclasses} we relax the notion of isomorphism to a local one, called weak equivalence.
We explain how to effectively check whether two fractional ideals are weakly equivalent and how to algorithmically reconstruct $\ICM(R)$ once we have computed $\Pic(S)$, for every over-order $S$ of $R$, and the monoid of weak equivalence classes $\Wk(R)$.
In Section \ref{sec:computingwkclasses} we explain a concrete way to compute representatives of the weak equivalence class monoid $\Wk(R)$.
In Section \ref{sec:algorithms} we give the pseudo-code of the algorithms described in the previous sections and discuss the running time and the bottlenecks.
In Section \ref{sec:examples} we present some concrete calculations of ideal class monoids.
Finally, in Section \ref{sec:matrixconj} we present our results about computing conjugacy classes of integral matrices and compare the running times of our algorithm and the algorithm proposed in \cite{EickHofmannOBrien19}, see Example \ref{ex:timings}.
Observe that Corollary \ref{cor:computeconjclass} gives a solution to \cite[Problem 7.7]{EickHofmannOBrien19} for matrices satisfying the appropriate hypotheses. 

The algorithms have been implemented in Magma \cite{Magma} and the code is available at \url{https://github.com/stmar89/AbVarFq}.

Another application, namely computing isomorphism classes of abelian varieties defined over a finite field belonging to an isogeny class determined by a square-free Weil polynomial, is discussed in \cite{MarAbVar18}.

\subsection*{Acknowledgments}
The author would like to thank Jonas Bergstr\"om for helpful discussions and Rachel Newton, Christophe Ritzenthaler, Peter Stevenhagen and Marco Streng for comments on a previous version of the paper, which is part of the author's Ph.D thesis \cite{MarPhDThesis18}.
The author wishes to thank J\"urgen Kl\"uners for pointing out the reference \cite{HusertPhDThesis17}.
We thank Bettina Eick, Tommy Hofmann and Eamonn O'Brien for allowing us to read an early version of their article \cite{EickHofmannOBrien19}.
We are grateful to the anonymous referee of the \textit{Journal of London Mathematical Society} for reading the paper carefully and providing thoughtful comments.


\section{Orders}
\label{sec:orders}


In what follows, the word ring will mean commutative ring with unit.
An \emph{order} is a reduced ring $R$, which is free and finitely generated as a $\Z$-module. Let $K$ be the total quotient ring of an order $R$, that is, the localization of $R$ at the multiplicative set of non-zero-divisors.
Then $K$ is an \'etale algebra over $\Q$ with $R\otimes_\Z \Q = K$, and in particular $K$ is a finite product of number fields, say $K= K_1\times \ldots \times K_r$. 
The set of orders in $K$ contains a maximal element with respect to the inclusion relation.
This order, denoted $\cO_K$, is the integral closure of $\Z$ in $K$ and it is usually referred to as the \emph{maximal order} or the \emph{ring of integers} of $K$.
Note that $\cO_K=\cO_{K_1} \times \ldots \cO_{K_r}$, where $\cO_{K_i}$ is the maximal order of $K_i$.
Indeed, $\cO_K$ contains $\prod_i\cO_{K_i}$, so $\cO_K$ is a product of orders $S_i$ in $K_i$ and by maximality it follows that $S_i=\cO_{K_i}$. 
There are well known algorithms to compute each $\cO_{K_i}$, see for example \cite[Chapter 6]{cohen93}, and in what follows we will assume that we can compute $\cO_K$.

From now on $R$ will be an order in $K$.
A finitely generated sub-$R$-module $I$ of $K$ is called a \emph{fractional $R$-ideal} if $I\otimes_\Z \Q=K$.
Such an $I$ is a finitely generated free $\Z$-module of the same rank as $R$, and so we can find $x_1,\ldots, x_n \in K$, where $n=\sum_{i=1}^r[K_i:\Q]$, such that
\[I=x_1\Z\oplus\ldots\oplus x_n\Z.\]
In particular, if $I\subseteq R$ then the quotient $R/I$ is finite.
We denote by $\cI(R)$ the set of all fractional ideals of $R$. 
Observe that for every fractional $R$-ideal $I$, there exists a non-zero-divisor $x\in K$ such that $xI$ is an ideal of $R$.
Moreover, every ideal of $R$ containing a non-zero-divisor is a fractional $R$-ideal.
The fractional $R$-ideals that are rings are called \emph{over-orders} of $R$. Since $\cO_K$ and $R$ have the same rank as free abelian groups, the quotient $\cO_K/R$ is finite and thus there are only finitely many over-orders of $R$.

Given two fractional $R$-ideals $I$ and $J$, the product $IJ$, the sum $I+J$, the intersection $I\cap J$, and the ideal quotient
\[(I:J)=\set{x\in K: xJ\subseteq I}\]
are fractional $R$-ideals.
In particular, ideal multiplication induces on $\cI(R)$ the structure of a commutative monoid with unit element $R$.
A useful property of the ideal quotient is the following lemma, whose proof we leave to the reader.
\begin{lemma}
\label{lemma:quotidealprod}
Let $I,J,L$ be fractional $R$-ideals, then
\[ ((I:J):L) = (I:JL).\]
\end{lemma}


If $I$ is a fractional $R$-ideal then $(I:I)$ is a sub-ring of $K$ containing $R$. Hence it is an over-order of $R$ and, in particular, it is the biggest over-order of $R$ for which $I$ is a fractional ideal. It is called the \emph{multiplicator ring} of $I$.

\begin{lemma}
\label{lemma:idempfracid}
 The over-orders of $R$ are precisely the idempotents of $\cI(R)$, that is, the  fractional $R$-ideals $S$ such that $SS=S$.
\end{lemma}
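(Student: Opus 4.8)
The plan is to prove both inclusions of the claimed equivalence: every over-order is an idempotent fractional ideal, and conversely every idempotent fractional ideal is an over-order.

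For the forward direction, suppose $S$ is an over-order of $R$. Then $S$ is by definition a fractional $R$-ideal that is also a ring containing $R$. Since $S$ is closed under multiplication, $SS\subseteq S$; and since $1\in S$, we have $S=1\cdot S\subseteq SS$. Hence $SS=S$, so $S$ is an idempotent of the monoid $\cI(R)$.

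For the converse, suppose $S\in\cI(R)$ satisfies $SS=S$. I would show $S$ is a ring containing $R$; being a fractional $R$-ideal, it will then be an over-order. Closure under multiplication is immediate from $SS=S$, and closure under addition and the $R$-module structure come for free since $S$ is a fractional $R$-ideal. The one genuinely substantive point is that $1\in S$, equivalently that $S$ contains a multiplicative identity. The natural approach is to use the multiplicator ring: let $T=(S:S)$, which by the discussion preceding the lemma is an over-order of $R$, and $S$ is a fractional $T$-ideal. From $SS=S$ we get $S\subseteq(S:S)=T$, so $S$ is an ideal of the ring $T$ with $SS=S$. Now over $T$, which is an order, $S$ is a finitely generated faithful module (it contains a non-zero divisor of $K$), so one can invoke a Nakayama-type or determinant-trick argument: $S\cdot S=S$ with $S$ finitely generated over $T$ forces the existence of an element $e\in S$ with $ex=x$ for all $x\in S$ — indeed, writing generators $x_1,\dots,x_n$ of $S$ and expressing each $x_i$ in terms of the $x_j$ using $SS=S$ gives a matrix equation $(I-M)\mathbf{x}=0$ with $M$ having entries in $S$, and multiplying by the adjugate shows $\det(I-M)$ annihilates $S$; since $S$ contains a non-zero divisor, $\det(I-M)=0$, and unwinding this produces the idempotent $e\in S$. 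Since $S\otimes_\Z\Q=K$ and $e$ acts as the identity on a $\Q$-spanning set, $e$ is the identity $1$ of $K$, so $1\in S$ and $S$ is a subring of $K$ containing $R$, hence an over-order.

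The main obstacle is the converse direction, specifically extracting the identity element from the bare relation $SS=S$; everything else is formal. The determinant-trick argument is the cleanest route, and the key hypothesis that makes it work is that a fractional ideal contains a non-zero divisor of $K$, which kills the annihilator and forces the relevant determinant to vanish. An alternative, perhaps slicker, phrasing: localize at $\Q$ to get $KK=K$ trivially and note that $S$ is a $\Z$-lattice of full rank; then $S\supseteq SS$ combined with $S$ being finitely generated over the order $(S:S)$ lets one conclude $S=(S:S)\cdot S$ is a module over the ring $(S:S)$ satisfying $S^2=S$, and the structure of ideals in orders (every ideal with $I^2=I$ in an order is generated by an idempotent of the ring) finishes it. I expect the author uses essentially the multiplicator-ring-plus-Nakayama argument sketched above.
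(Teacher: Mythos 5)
Your proof is correct and follows essentially the same route as the paper: the forward direction is the same triviality, and for the converse the paper also passes to the multiplicator ring $T=(S:S)$, notes $S\subseteq T$ is a finitely generated idempotent $T$-ideal, applies the determinant trick to produce an idempotent generator $e$, and uses the fact that $S$ has full rank (contains a non-zero divisor) to force $e=1$. No substantive differences.
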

\begin{proof}
 Let $S$ be an over-order of $R$. Then $S$ is multiplicatively closed and contains $1$, so $SS=S$.
 Conversely, let $S$ be an idempotent fractional ideal of $R$. Let $T=(S:S)$ be the multiplicator ring of $S$. As $SS=S$ we have $S\subseteq T$ and hence $S$ is a finitely generated idempotent $T$-ideal.
 By the determinant trick  it must be generated by an idempotent element $e$ of $T$.
 As $S$ has full rank over $\Z$ we must have $e=1$, that is, $S=T$.
 In particular, $S$ is an over-order of $R$.
\end{proof}

We will denote by $\Tr_{K/\Q}$, or simply $\Tr$ when no confusion can arise, the \emph{trace form} on $K$, which associates to every $x\in K$ the trace of the matrix of the multiplication by $x$.
For every fractional $R$-ideal $I$, we define the \emph{trace dual ideal} as $I^t=\set{x\in K : \Tr(xI)\subseteq \Z}$.
Given a $\Z$-basis $\{x_i\}$ of $I$, we have $I^t=x_1^*\Z\oplus\ldots \oplus x_n^*\Z$, where $\{x_j^*\}$ is the \emph{trace dual basis}, which is characterized by $\Tr(x_ix_j^*)=1$ or $0$ according to whether $i=j$ or $i\neq j$.
Observe that $I^t$ is a fractional $R$-ideal and that the map $x\mapsto \vphi_x$, where $\vphi_x(y)=\Tr(xy)$ is an isomorphism from  $I^t$ to $\Hom_\Z(I,\Z)$.
%
%
In the next lemma we will summarize some well known properties of the trace dual ideal.
\begin{lemma}
 Let $R$ be an order in $K$, let $I$ and $J$ be two fractional $R$-ideals and let $x$ be in $K^\times$.
 Then the following holds:
 \begin{itemize}
  \item $(I^t)^t=I$,
  \item $I\subset J \Longleftrightarrow J^t\subset I^t$,
  \item $(I\cap J)^t = I^t + J^t$,
  \item $(x I)^t = \frac 1 x I^t$,
  \item $(I:J)=(I^t J)^t$,
  \item $(I:J)=(J^t:I^t)$,
  \item \label{tracedualmultiplicatorring} $S=(I:I) \Longleftrightarrow II^t=S^t$.
 \end{itemize}
\end{lemma}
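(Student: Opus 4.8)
The plan is to derive every item from the isomorphism $x\mapsto\vphi_x$ between $I^t$ and $\Hom_\Z(I,\Z)$ recorded just above, that is, from the non-degeneracy of the trace form together with the dual-basis description $I^t=x_1^*\Z\oplus\cdots\oplus x_n^*\Z$. The first item, $(I^t)^t=I$, drops out of this description: the trace-dual basis of $\{x_i^*\}$ is the family characterized by $\Tr(x_i^* y_j)=\delta_{ij}$, and $\{x_i\}$ has exactly this property, so $(I^t)^t=\bigoplus_i x_i\Z=I$. In particular $I\mapsto I^t$ is an inclusion-reversing involution of $\cI(R)$, which is used repeatedly below.

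For the second item, $I\subseteq J$ forces $J^t\subseteq I^t$ straight from the definition (if $\Tr(xJ)\subseteq\Z$ then a fortiori $\Tr(xI)\subseteq\Z$); applying this to $J^t\subseteq I^t$ and then biduality gives the converse implication, and strictness follows because $I^t=J^t$ would give $I=(I^t)^t=(J^t)^t=J$. The fourth item is one line: $\Tr(y\cdot xI)\subseteq\Z\iff\Tr((yx)I)\subseteq\Z\iff yx\in I^t\iff y\in x^{-1}I^t$. For the third item I first record the formula $(A+B)^t=A^t\cap B^t$ for fractional ideals $A,B$: one has $\Tr(x(A+B))=\Tr(xA)+\Tr(xB)$, and since $0$ lies in both $xA$ and $xB$, this set of rationals is contained in $\Z$ precisely when each of $\Tr(xA)$, $\Tr(xB)$ is. Applying this with $A=I^t$, $B=J^t$ gives $(I^t+J^t)^t=(I^t)^t\cap(J^t)^t=I\cap J$; dualizing (note $I^t+J^t\in\cI(R)$) yields $(I\cap J)^t=I^t+J^t$.

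The heart of the matter is the fifth item, $(I:J)=(I^tJ)^t$, from which the sixth and seventh follow formally. To prove it, note that for $x\in K$ one has $\Tr(xI^tJ)\subseteq\Z$ if and only if $\Tr(xjI^t)\subseteq\Z$ for every $j\in J$: a finite $\Z$-linear combination of traces $\Tr(xy_kj_k)$ lies in $\Z$ if and only if each summand does, once more because the lattices involved contain $0$. By the definition of the trace dual and $(I^t)^t=I$, the latter condition reads $xj\in I$ for every $j\in J$, i.e. $xJ\subseteq I$, i.e. $x\in(I:J)$. Granting this, the sixth item is $(J^t:I^t)=((J^t)^tI^t)^t=(JI^t)^t=(I^tJ)^t=(I:J)$; and for the seventh, taking $J=I$ gives $(I:I)=(I^tI)^t=(II^t)^t$, so $S=(I:I)$ is equivalent to $S=(II^t)^t$, which by biduality (and $II^t\in\cI(R)$) is equivalent to $S^t=II^t$.

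I expect the only delicate points to be the set-theoretic manipulations with traces of sums and products of lattices — where one must use that these are $\Z$-modules containing $0$, so that ``the trace of the entire lattice lies in $\Z$'' decouples into a condition on each generator — and keeping track, at every use of biduality, that the ideal being dualized really is a fractional $R$-ideal; neither is a genuine obstacle once spelled out.
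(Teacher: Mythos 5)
The paper states this lemma without proof, introducing it only as a summary of ``well known properties'' of the trace dual, so there is no argument of the author's to compare yours against. Your proof is correct and complete: item 1 follows from the symmetry and non-degeneracy of the trace form via the dual-basis description already recorded in the paper; items 2--4 are the routine consequences you give; and your reduction of items 5--7 to the single identity $(I:J)=(I^tJ)^t$, proved by decoupling $\Tr\bigl(x\,I^tJ\bigr)\subseteq\Z$ into the conditions $xj\in(I^t)^t=I$ for $j\in J$, is exactly the standard route. The two points of care you flag --- that traces of sums/products of lattices decouple termwise because the lattices are $\Z$-modules containing $0$, and that biduality is only invoked for genuine fractional $R$-ideals (e.g. $I^t+J^t$, $II^t$, $S$) --- are indeed the only places where something could go wrong, and you handle both. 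Nothing is missing.
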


Let $\p$ be a prime ideal of $R$ which is also a fractional $R$-ideal. Since the integral domain $R/\p$ is finite, we see that $\p$ is a maximal ideal.
Conversely, if $\frm$ is a maximal ideal of $R$ then it contains the prime $p$ which is the characteristic of the field $R/\frm$, and hence $\frm$ is a fractional $R$-ideal.
We will refer to the maximal ideals of $R$ as the \emph{primes} of $R$.
Since for any fractional $R$-ideal $I$ contained in $R$ the quotient $R/I$ is finite, we deduce that there exists only a finite number of primes of $R$ containing $I$.

A fractional $R$-ideal $I$ is said to be \emph{invertible} if $IJ=R$, for some fractional $R$-ideal $J$. Observe that if such a $J$ exists then $J=(R:I)$.

\begin{remark}
  Note that we could equivalently say that an $R$-ideal $I$ is invertible if and only if there exists an $R$-ideal $J$ and a non-zero-divisor $d$ such that $IJ=dR$.
  This characterization allows us to talk about invertible ideals in any ring.
\end{remark}

\begin{lemma}
\label{lemma:invidealmultring}
 Let $I$ be an invertible fractional $R$-ideal.
 Then $R$ is the multiplicator ring of $I$.
\end{lemma}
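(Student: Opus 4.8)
The plan is to establish the two inclusions $R \subseteq (I:I)$ and $(I:I) \subseteq R$. The first holds for every fractional $R$-ideal, since $I$ is by definition an $R$-module, so multiplication by elements of $R$ preserves $I$; this was already recorded in the discussion preceding the statement, where $(I:I)$ is identified as an over-order of $R$. Thus the whole content of the lemma lies in the reverse inclusion $(I:I)\subseteq R$.

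For that inclusion I would invoke invertibility in the form recalled just above the statement: there is a fractional $R$-ideal $J$ with $IJ = R$ (and necessarily $J=(R:I)$, though the mere existence of such a $J$ is all that is needed here). Using that $(I:I)\,I\subseteq I$ by definition of the multiplicator ring, together with the associativity of ideal multiplication and the fact that $R$ is the unit of the monoid $\cI(R)$, one computes
\[
(I:I) = (I:I)\cdot R = (I:I)\,(IJ) = \big((I:I)\,I\big)\,J \subseteq I\,J = R.
\]
Equivalently, at the level of elements: if $x\in(I:I)$ then $xI\subseteq I$, whence $xR = x(IJ) = (xI)J \subseteq IJ = R$, and since $1\in R$ this forces $x\in R$. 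Combining the two inclusions yields $(I:I)=R$, i.e. $R$ is the multiplicator ring of $I$.

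There is essentially no obstacle: the argument is a two-line manipulation of ideal products. The only points needing care are purely formal, namely that $(I:I)$ is itself a fractional $R$-ideal (so that the product $(I:I)(IJ)$ is a well-defined element of $\cI(R)$) and that ideal multiplication is associative with unit $R$ and compatible with inclusions — all of which were established in the preliminary discussion of this section.
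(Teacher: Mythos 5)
Your argument is correct and is essentially the paper's own proof: the paper sets $S=(I:I)$, notes $R\subseteq S$, and computes $R = I(R:I) = SI(R:I) = SR = S$ using $SI=I$, which is the same manipulation you perform (you use the inclusion $(I:I)I\subseteq I$ and then combine with the trivial containment $R\subseteq (I:I)$, a negligible variation). No gaps.
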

\begin{proof}
 Put $S=(I:I)$. Since $I$ is an $R$-module we have $R\subseteq S$ and using $I=SI$ we deduce that
 \[ R = I(R:I) = SI(R:I) = SR = S. \]
\end{proof}

\begin{remark}
  Let $I$ be a fractional $R$-ideal and let $S$ an over-order of $R$.
  In view of Lemma \ref{lemma:invidealmultring}, the expression ``$I$ is invertible as fractional $S$-ideal" implies that $S$ is the multiplicator ring of $I$.
\end{remark}

The following lemmas are useful for understanding how invertible ideals behave with respect to localizations at primes.

\begin{lemma}{\cite[Theorem 12.3]{Ka49}}
\label{lemma:PIRonmaximals}
 Let $T$ be a Noetherian ring. Then $T$ is a principal ideal ring if and only if every maximal ideal is principal.
\end{lemma}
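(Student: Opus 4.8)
One direction is trivial: in a principal ideal ring every ideal, hence every maximal ideal, is principal. For the converse I would argue by Noetherian induction on non-principal ideals. Suppose $T$ is Noetherian with every maximal ideal principal, but that $T$ is not a principal ideal ring; then the set $\Sigma$ of non-principal ideals of $T$ is nonempty, and by the ascending chain condition it has a maximal element $I$. Since $(0)$ and $T$ are principal we have $(0)\subsetneq I\subsetneq T$, and the plan is to reach a contradiction by showing that $I$ is in fact principal.

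\emph{Step 1 (reduce to $I=tI$).} Pick a maximal ideal $\frm\supseteq I$; by hypothesis $\frm=(t)$ for some $t\in T$. As $I\subseteq(t)$, every $x\in I$ can be written $x=ty$ for some $y\in T$, and $ty=x\in I$ shows $y\in(I:(t))$; hence $I=(t)(I:(t))$, the reverse inclusion being clear. Since $I\subseteq(I:(t))$ always holds, if this were a strict inclusion then $(I:(t))$ would be principal by maximality of $I$, and so would the product $I=(t)(I:(t))$, contradicting $I\in\Sigma$. Therefore $(I:(t))=I$, i.e.\ $I=tI$.

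\emph{Steps 2--3 (annihilate $I$ outside $\frm$, then force $I=I^2$).} Write $I=(a_1,\dots,a_n)$, using that $T$ is Noetherian. From $I=tI$, each $a_i$ equals $t\sum_j c_{ij}a_j$ for suitable $c_{ij}\in T$, so the matrix $M=\mathrm{Id}-t(c_{ij})$ annihilates the column $(a_1,\dots,a_n)^{\mathsf T}$; multiplying by the adjugate of $M$ yields $sa_i=0$ for all $i$, where $s=\det M\in 1+tT$. Thus $sI=0$ and $s\notin\frm$ (since $s\equiv1$ modulo $tT\subseteq\frm$), and in particular $s\notin I$, so $I\subsetneq I+(s)$. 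If $I+(s)=T$, write $1=i_0+r_0s$ with $i_0\in I$; then for every $x\in I$ we get $x=xi_0+r_0(sx)=xi_0\in I^2$, so $I=I^2$. Otherwise $I+(s)$ is a proper ideal properly containing $I$, hence principal by maximality of $I$, say $I+(s)=(c)$; since $I\subseteq(c)$, the argument of Step~1 gives $I=(c)(I:(c))$, and either $I\subsetneq(I:(c))$, which makes $I$ principal, or $I=(I:(c))$, whence $I=(c)I=(I+(s))I=I^2+sI=I^2$. In every case $I=I^2$, and since $I$ is finitely generated the determinant trick (exactly as in the proof of Lemma~\ref{lemma:idempfracid}) produces an idempotent $e\in I$ with $I=(e)$. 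Thus $I$ is principal, contradicting $I\in\Sigma$; hence $\Sigma=\varnothing$ and $T$ is a principal ideal ring.

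The colon-ideal bookkeeping and the two uses of the determinant trick are routine. The step I expect to need the most care is the passage $I=tI\Rightarrow sI=0$ with $s\notin\frm$ in Step~2: one cannot simply quote Nakayama's lemma, because the single maximal ideal $\frm$ need not lie in the Jacobson radical of $T$. Either one localizes at $\frm$ — where $T_\frm$ is Noetherian local, so Nakayama (or the Krull intersection theorem) applies there and the conclusion can be spread back out — or, as above, one invokes Cayley--Hamilton directly so as to stay inside $T$.
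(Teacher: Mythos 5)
The paper does not prove this lemma at all: it is quoted directly from Kaplansky \cite[Theorem 12.3]{Ka49}, so there is no internal proof to compare against and your argument must be judged on its own. On its own it is correct and complete. The forward direction is trivial, and your converse is a clean ``maximal counterexample'' argument: a maximal element $I$ of the set $\Sigma$ of non-principal ideals exists by the ascending chain condition; writing $\frm=(t)\supseteq I$ forces $I=(t)(I:(t))$, and maximality of $I$ in $\Sigma$ (together with the observation that $(I:(t))=T$ would make $I=(t)$ outright) forces $(I:(t))=I$, i.e.\ $I=tI$; the adjugate computation then yields $s\in 1+tT$ with $sI=0$, and since $s\notin\frm\supseteq I$ the ideal $I+(s)$ strictly contains $I$, so it is either all of $T$ or principal and proper, and in both cases you correctly land on $I=I^2$; the determinant trick --- the same one the paper invokes in the proof of Lemma \ref{lemma:idempfracid} --- then produces an idempotent generator of $I$, contradicting $I\in\Sigma$. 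The point you flagged as delicate is handled properly: you cannot quote Nakayama because $\frm$ need not lie in the Jacobson radical, and your direct Cayley--Hamilton argument sidesteps this while staying entirely global (no localization, no Krull intersection theorem), which also keeps the proof valid for rings with zero divisors, as is needed in the paper (the lemma is applied to semi-local rings $R_\p$ that are generally not domains). The only cosmetic suggestion is to state explicitly in Step~1 that the case $(I:(t))=T$ is also excluded for the same reason; you cover it implicitly since $T$ itself is principal, but a reader may pause there.
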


\begin{lemma}{\cite[Proposition 7.4]{Gilmer92}}
\label{lemma:invertilbeprincipal}
 Let $T$ be a ring with finitely many maximal ideals and let $I$ be a $T$-ideal. Then $I$ is invertible in $T$ if and only if $I$ is principal and generated by a non-zero-divisor.
\end{lemma}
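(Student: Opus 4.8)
The plan is to prove the two implications separately. The implication that an ideal which is principal and generated by a non-zero divisor is invertible is immediate: if $I=dT$ with $d$ a non-zero divisor, then $I\cdot T=dT$, which is precisely invertibility in the sense of the Remark above. For the converse, assume $I$ is invertible. I would first record that $I$ is finitely generated and contains a non-zero divisor. By the Remark there are a $T$-ideal $J$ and a non-zero divisor $d$ with $IJ=dT$, so $d\in IJ\subseteq I$; writing $d=\sum_{i=1}^m a_ij_i$ with $a_i\in I$ and $j_i\in J$, and noting that for $x\in I$ one has $xj_i\in IJ=dT$, say $xj_i=dt_i$, the identity $dx=\sum_i a_i(xj_i)=d\sum_i a_it_i$ together with the fact that $d$ is a non-zero divisor gives $x=\sum_i a_it_i$; hence $I=(a_1,\dots,a_m)$. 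Equivalently, inside the total quotient ring of $T$ one has $II^{-1}=T$ for $I^{-1}=(T:I)$, and a relation $\sum_i a_ib_i=1$ with $a_i\in I$ and $b_i\in I^{-1}$ (so $a_ib_i\in T$) carries the same information.

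Next I would describe the local structure of $I$. For a maximal ideal $\frm$ of $T$, localizing the relation $\sum_i a_ib_i=1$ shows that some $a_ib_i$ is a unit in the local ring $T_\frm$, and then for $x\in I$ the computation $x=(a_ib_i)^{-1}a_i(xb_i)$, with $xb_i\in T$, gives $I_\frm=a_iT_\frm$; so $I_\frm$ is principal, and since $I$ is finitely generated and $I_\frm\neq 0$ (as $I$ contains a non-zero divisor), Nakayama's lemma shows that $I/\frm I$ is a nonzero cyclic, hence one-dimensional, module over $T/\frm$. The third step, which is where the hypothesis of finitely many maximal ideals enters, is to glue the local generators into a single global one. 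Let $\frm_1,\dots,\frm_n$ be the maximal ideals of $T$. By the Chinese Remainder Theorem $T/\prod_j\frm_j\cong\prod_j T/\frm_j$, and tensoring this isomorphism with $I$ over $T$ identifies $I/(\prod_j\frm_j)I$ with $\prod_j I/\frm_j I$; hence the natural map $I\to\prod_j I/\frm_j I$, onto a product of one-dimensional $T/\frm_j$-vector spaces, is surjective. Choosing $d\in I$ with nonzero image in every factor, one-dimensionality and Nakayama give that $d$ generates $I_{\frm_j}$ for each $j$, and then $dT=I$: the inclusion $dT\subseteq I$ is clear, and for $x\in I$ the equality $I_{\frm_j}=dT_{\frm_j}$ shows that the ideal $\{t\in T:tx\in dT\}$ lies in no $\frm_j$, hence equals $T$, so that $x\in dT$. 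Finally $d$ is a non-zero divisor, since $I=dT$ contains one and $d$ divides it.

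The manipulations with ideal quotients and the two appeals to Nakayama's lemma are routine; the genuinely delicate point is the local-to-global passage of the third step. Invertibility only guarantees that $I$ becomes principal after localizing at each maximal ideal, a priori with unrelated generators, and in general there is no way to merge these into a single global generator---indeed the conclusion fails without a hypothesis such as semilocality. Having only finitely many maximal ideals is exactly what makes the Chinese Remainder Theorem applicable, so that one element $d\in I$ can be chosen to generate $I$ at every maximal ideal simultaneously.
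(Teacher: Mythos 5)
The paper does not prove this lemma at all: it is stated as a citation to Gilmer's \emph{Multiplicative Ideal Theory} (Proposition 7.4), so there is no in-paper argument to compare yours against. Your proof is correct and is essentially the standard one: invertibility (in the sense of the Remark, $IJ=dT$ with $d$ a non-zero divisor) forces $I$ to be finitely generated and locally principal at every maximal ideal, and the semilocal hypothesis lets you glue the local generators via the Chinese Remainder Theorem and Nakayama into a single generator, which is then a non-zero divisor because $I$ contains one. You also correctly isolate where finiteness of the set of maximal ideals is indispensable; the argument is complete and would serve as a self-contained substitute for the citation.
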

It is not difficult to prove the following.
\begin{lemma}
\label{lemma:invlocprinc}
 Let $I$ be a fractional $R$-ideal.
 Then $I$ is invertible if and only if $I_\p$ is principal for every prime $\p$ of $R$.
\end{lemma}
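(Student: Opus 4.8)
The plan is to prove the two implications separately, using the dictionary between global ideal identities and their localizations. First I would establish the easy direction: if $I$ is invertible in $R$, then $IJ = R$ for $J = (R:I)$, and localizing at any prime $\p$ gives $I_\p J_\p = R_\p$, so $I_\p$ is an invertible $R_\p$-ideal; since $R_\p$ is a Noetherian local ring (in particular has a unique maximal ideal), Lemma \ref{lemma:invertilbeprincipal} applies and tells us $I_\p$ is principal (generated by a non-zero divisor). This direction requires only that localization commutes with ideal products and sums, and that $(R:I)_\p = (R_\p : I_\p)$, which follows because $I$ is finitely generated.

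For the converse, suppose $I_\p$ is principal for every prime $\p$ of $R$. I want to show $I(R:I) = R$. Set $L = I(R:I)$; this is a fractional $R$-ideal contained in $R$, hence an honest ideal of $R$ with $R/L$ finite, so only finitely many primes of $R$ contain $L$. The strategy is to show $L$ is not contained in any maximal ideal of $R$, which forces $L = R$. Equivalently, I would show $L_\p = R_\p$ for every prime $\p$. Fixing $\p$, by hypothesis $I_\p = x R_\p$ for some $x \in K$; since $I_\p$ has full rank (it still satisfies $I_\p \otimes_\Z \Q = K$), $x$ is a non-zero divisor, so $I_\p$ is invertible in $R_\p$ with inverse $x^{-1} R_\p = (R_\p : I_\p)$. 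Then $L_\p = I_\p (R:I)_\p = I_\p (R_\p : I_\p) = R_\p$, again using $(R:I)_\p = (R_\p:I_\p)$. Since this holds at every prime, $L = R$, so $I$ is invertible.

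The main technical point to get right — and the step I expect to be the only real obstacle — is the compatibility of the ideal quotient with localization, namely $(R:I)_\p = (R_\p:I_\p)$ (and more generally that forming $(I:J)$ commutes with localization when $J$ is finitely generated). This is standard but deserves a line: writing $J = J_1 R + \cdots + J_k R$ with generators $J_i$, one has $(I:J) = \bigcap_i (I : J_i R)$, and each $(I:J_i R) = J_i^{-1} I \cap \dots$ behaves well under localization, while a finite intersection of fractional ideals also localizes correctly. The remaining ingredients — that $L \subseteq R$ with $L_\p = R_\p$ for all $\p$ implies $L = R$ (check at each maximal ideal, using that $R/L$ is a finite module, so $L = \bigcap_\p (L_\p \cap R)$), and that a full-rank principal $R_\p$-ideal is generated by a non-zero divisor — are routine.
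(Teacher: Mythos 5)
Your proof is correct and follows essentially the same route as the paper: the forward direction localizes the identity $I(R:I)=R$ and invokes Lemma \ref{lemma:invertilbeprincipal}, and the converse shows the inclusion $I(R:I)\subseteq R$ becomes an equality after localizing at each prime (using that the local generator is a non-zero divisor because $I$ has full rank), hence is an equality globally. The only difference is that you make explicit the compatibility $(R:I)_\p=(R_\p:I_\p)$, which the paper uses without comment; your justification of it is adequate.
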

From the previous lemmas it is easy to deduce the following result.
\begin{cor}
\label{cor:pregular}
 Let $\p$ be a prime of $R$. Then $\p$ is invertible if and only if $R_\p$ is a principal ideal ring.
\end{cor}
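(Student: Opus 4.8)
The plan is to combine Lemma \ref{lemma:invlocprinc}, which characterizes invertibility of $\p$ by the principality of its localizations at \emph{all} primes of $R$, with Lemma \ref{lemma:PIRonmaximals}, which characterizes principal ideal rings among Noetherian rings by the principality of their maximal ideals. The bridge between the two statements is the observation that, since every prime of $R$ is maximal, the only localization of $\p$ that fails to be the whole ring is $\p_\p$, and $\p_\p$ is precisely the unique maximal ideal of the local ring $R_\p$.

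In more detail, I would first record that $R$ is Noetherian, being finitely generated as a $\Z$-module, hence so is the localization $R_\p$, which is moreover local with unique maximal ideal $\p_\p = \p R_\p$. Next I would observe that for a prime $\q$ of $R$ with $\q \neq \p$ one has $\p_\q = R_\q$: since $\p$ and $\q$ are both maximal and distinct, $\p \not\subseteq \q$, so there is an element $s \in \p$ with $s \notin \q$; then $s$ becomes a unit in $R_\q$ while lying in $\p_\q$, which forces $\p_\q = R_\q$, in particular $\p_\q$ is principal.

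Putting these facts together: by Lemma \ref{lemma:invlocprinc}, $\p$ is invertible in $R$ if and only if $\p_\q$ is principal for every prime $\q$ of $R$, and by the previous step this condition collapses to the single requirement that $\p_\p$ be principal. Since $\p_\p$ is the unique maximal ideal of the Noetherian local ring $R_\p$, Lemma \ref{lemma:PIRonmaximals} tells us that $\p_\p$ is principal if and only if $R_\p$ is a principal ideal ring. Chaining these equivalences yields the corollary.

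I do not expect a serious obstacle. The only point deserving a little care is that $R$, and hence $R_\p$, may fail to be a domain when $K$ is a genuine product of number fields, so one must invoke the versions of the cited results valid for rings with zero divisors; but both Lemma \ref{lemma:PIRonmaximals} and Lemma \ref{lemma:invlocprinc} are already stated in that generality, so this is not an issue.
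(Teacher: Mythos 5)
Your proof is correct and follows essentially the same route as the paper: both arguments reduce everything to the principality of $\p R_\p$, using the observation that $\p R_\q = R_\q$ for $\q \neq \p$ together with Lemma \ref{lemma:invlocprinc} and Lemma \ref{lemma:PIRonmaximals}. The only cosmetic difference is that the paper invokes Lemma \ref{lemma:invertilbeprincipal} directly in the forward direction, whereas you route both directions through Lemma \ref{lemma:invlocprinc}; the content is the same.
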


Observe that $\cO_K$ is the only order in $K$ whose fractional ideals are all invertible.
We introduce now some classes of orders which are particularly well behaved in terms of invertibility of ideals.
\begin{prop}{\cite[Proposition 2.7]{buchlenstra}}
\label{prop:eqGorenstein}
 Let $R$ be an order with trace dual $R^t$. The following are equivalent:
 \begin{enumerate}[(a)]
  \item for every fractional $R$-ideal $I$, we have $(R:(R:I))=I$;
  \item for every fractional $R$-ideal $I$, we have $(I:I)=R$ if and only if $I$ is invertible;
  \item $R^t$ is invertible in $R$.
 \end{enumerate}
\end{prop}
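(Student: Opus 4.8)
The plan is to close a cycle of implications using only the properties of the trace dual ideal collected above and Lemma~\ref{lemma:quotidealprod}. Two observations will carry most of the weight. First, combining the rules $(I:J)=(I^tJ)^t$, $(I:J)=(J^t:I^t)$ and $(I^t)^t=I$ one gets the identity
\[ (R:(R:I))=(IR^t:R^t) \]
for every fractional $R$-ideal $I$: indeed $(R:I)=(I^t:R^t)=(IR^t)^t$, and applying the same two rules once more to $(R:(IR^t)^t)$ gives $(IR^t:R^t)$. Second, $(R^t:R^t)=R$ for \emph{every} order, since $(R^t:R^t)=((R^t)^t:(R^t)^t)=(R:R)=R$ by the rule $(I:J)=(J^t:I^t)$. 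With these in hand I would prove (c)$\Leftrightarrow$(b), then (c)$\Rightarrow$(a), and finally (a)$\Rightarrow$(c), the last being the only step with real content.

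For (c)$\Rightarrow$(b): if $I$ is invertible then $(I:I)=R$ by Lemma~\ref{lemma:invidealmultring}; conversely, if $(I:I)=R$ then by the last property in the lemma on trace dual ideals we have $II^t=R^t$, so assuming (c) the element gives $I\cdot\bigl(I^t(R:R^t)\bigr)=R^t(R:R^t)=R$, and $I$ is invertible. For (b)$\Rightarrow$(c): apply (b) to the ideal $I=R^t$ and use $(R^t:R^t)=R$ to conclude at once that $R^t$ is invertible. For (c)$\Rightarrow$(a): from the identity, $(R:(R:I))=(IR^t:R^t)$; since $R^t$ is invertible one has $(J:R^t)=J(R:R^t)$ for any fractional ideal $J$ (the inclusion $\supseteq$ is immediate, and for $\subseteq$ one multiplies $yR^t\subseteq J$ by $(R:R^t)$ and uses $R^t(R:R^t)=R$), hence $(IR^t:R^t)=IR^t(R:R^t)=IR=I$.

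The substantive implication is (a)$\Rightarrow$(c). Assume $(R:(R:I))=I$ for all $I$, and set $\mathfrak b=R^t(R:R^t)$, a fractional $R$-ideal contained in $R$. I would compute $(R:\mathfrak b)$ with Lemma~\ref{lemma:quotidealprod}, writing $\mathfrak b=(R:R^t)\cdot R^t$:
\[ (R:\mathfrak b)=\bigl(R:(R:R^t)\,R^t\bigr)=\bigl((R:(R:R^t)):R^t\bigr)=(R^t:R^t)=R, \]
where the third equality applies (a) to $I=R^t$ and the last is the universal identity $(R^t:R^t)=R$. Now apply (a) once more, this time to the ideal $\mathfrak b$ itself: $\mathfrak b=(R:(R:\mathfrak b))=(R:R)=R$. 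Hence $R^t(R:R^t)=R$, i.e.\ $R^t$ is invertible in $R$, which is (c).

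The main obstacle is locating this argument, that is, realizing that $\mathfrak b=R^t(R:R^t)$ — the obvious candidate for an ideal equal to $R$ precisely when $R^t$ is invertible — becomes divisorially trivial under hypothesis (a): Lemma~\ref{lemma:quotidealprod} lets one group the product $R^t(R:R^t)$ in two ways, and the grouping together with the reflexivity of $R^t$ from (a) forces $(R:\mathfrak b)=R$, after which reflexivity of $\mathfrak b$ itself does the rest. Once the identity $(R:(R:I))=(IR^t:R^t)$ and the two-way grouping are in place everything is formal; no localization or structure theory of the multiplicator ring is needed.
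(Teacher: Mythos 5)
Your argument is correct. Note that the paper does not prove this proposition at all: it is quoted directly from \cite[Proposition 2.7]{buchlenstra}, so there is no in-paper proof to compare against. What you supply is a complete, self-contained derivation using only ingredients already stated in the paper, namely the trace-dual identities $(I^t)^t=I$, $(I:J)=(I^tJ)^t$, $(I:J)=(J^t:I^t)$, $(I:I)=S\Leftrightarrow II^t=S^t$, Lemma~\ref{lemma:quotidealprod}, and Lemma~\ref{lemma:invidealmultring}. I checked each step: the identity $(R:(R:I))=(IR^t:R^t)$, the universal equality $(R^t:R^t)=R$, the equivalence (b)$\Leftrightarrow$(c), the implication (c)$\Rightarrow$(a) via $(J:R^t)=J(R:R^t)$ for invertible $R^t$, and the key implication (a)$\Rightarrow$(c), where regrouping $R^t(R:R^t)$ with Lemma~\ref{lemma:quotidealprod} and applying (a) to $R^t$ gives $(R:R^t(R:R^t))=(R^t:R^t)=R$, after which reflexivity of $R^t(R:R^t)$ forces it to equal $R$. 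All steps are valid; the only quibbles are cosmetic (``the element gives'' should read ``the equality gives,'' and it is worth saying explicitly that $R^t(R:R^t)\subseteq R$ holds by definition of the colon ideal, so that (a) may legitimately be applied to it). Compared with the original source, which works with somewhat more machinery in the setting of orders in number fields, your route is purely formal ideal arithmetic with no localization, which fits the toolkit this paper sets up.
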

An order satisfying one of the equivalent conditions of Proposition \ref{prop:eqGorenstein} is called \emph{Gorenstein}.
This definition is equivalent to the usual one, see \cite[Theorem 6.3]{basshy63}.
Observe that $\cO_K$ is Gorenstein, but there are Gorenstein orders which are not maximal. One class of examples of Gorenstein orders are the \emph{monogenic} orders, which are of the form $\Z[x]/(f)$, where $f$ is a monic polynomial with integer coefficients and non-zero discriminant.
\begin{cor}{\cite[Example 2.8]{buchlenstra}}
   Monogenic orders are Gorenstein.
\end{cor}

An order $R$ is called a \emph{Bass} order if every over-order of $R$ is Gorenstein, 
or equivalently, if the $R$-module $\cO_K/R$ is cyclic, that is, if $\cO_K=R+xR$ for some $x \in \cO_K$. For a proof and other equivalent characterizations, see for example \cite[Theorem 2.1]{LevyWiegand85} and Proposition \ref{prop:icmclifford}. Observe that every order in a quadratic number field is a Bass order.


\section{Ideal classes}
\label{sec:idealclasses}


Recall that for an order $R$ in $K$ we denote by $\cI(R)$ the commutative monoid of fractional $R$-ideals.
\begin{df}
 Let $R$ be an order in $K$. The \emph{ideal class monoid} of $R$ is
 \[\ICM(R)=\faktor{\cI(R)}{\simeq},\]
 where $I\simeq J$ if and only if $I$ and $J$ are isomorphic as $R$-modules. We will denote the ideal class of $I$ with $\isoclass{I}$.
\end{df}
The name is justified by the fact that $\ICM(R)$ inherits the commutative monoid structure of $\cI(R)$, as will become evident with Corollary \ref{cor:idemodprinc}.
\begin{lemma}
\label{lemma:isofracideals}
 Let $R$ be an order in $K$. Consider an $R$-module morphism $\vphi:I\to J$, where $I$ and $J$ are two fractional $R$-ideals, then $\vphi$ is a multiplication by some $\alpha \in K$.
\end{lemma}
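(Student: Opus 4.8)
The plan is to extend $\vphi$ to a $K$-linear map and identify it with multiplication by the image of $1$. First I would note that since $I$ and $J$ are fractional $R$-ideals, we have $I\otimes_\Z\Q = J\otimes_\Z\Q = K$, so tensoring the $R$-module morphism $\vphi$ with $\Q$ over $\Z$ yields a $K$-module endomorphism $\vphi_\Q : K\to K$ (here I use that $K = R\otimes_\Z\Q$ acts on both sides and that $\vphi$ is $R$-linear, hence $\vphi_\Q$ is $R\otimes\Q = K$-linear). A $K$-linear endomorphism of the rank-one free $K$-module $K$ is determined by where it sends $1$: setting $\alpha = \vphi_\Q(1)\in K$, we get $\vphi_\Q(y) = \alpha y$ for all $y\in K$ by $K$-linearity.

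Next I would restrict back to $I$. Since $I\subseteq K$ and $\vphi_\Q$ restricts to $\vphi$ on $I$ (the natural map $I\to I\otimes_\Z\Q$ is injective because $I$ is a free $\Z$-module, i.e. torsion-free), we conclude $\vphi(x) = \alpha x$ for every $x\in I$. This is exactly the claim. One should also check consistency, namely that $\alpha I\subseteq J$, but this is automatic since $\alpha x = \vphi(x)\in J$ for all $x\in I$.

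The only mild subtlety — and the step I would be most careful about — is justifying that $\vphi\otimes_\Z\Q$ is $K$-linear and not merely $\Q$-linear: this follows because $R$ acts on $I\otimes\Q$ and $J\otimes\Q$ through its image in $K$, the action extends $\Q$-bilinearly to a $K = R\otimes\Q$-action, and $\vphi\otimes\Q$ commutes with the $R$-action hence (by $\Q$-linearity and density of $R\otimes\Q$ arguments, or simply because every element of $K$ is a $\Q$-linear combination of elements of $R$) with the full $K$-action. Alternatively, one can avoid tensor products entirely: pick a non-zero divisor $d\in R$ with $dI\subseteq R$; then for $x,x'\in I$ with $x$ a non-zero divisor, the identity $x'\cdot\vphi(x) = \vphi(x'x) = x\cdot\vphi(x')$ in $J$ (using $R$-linearity after clearing to $R$) gives $\vphi(x')/x' = \vphi(x)/x =: \alpha\in K$ independent of the chosen element, and one checks $\alpha$ works for every element of $I$, including zero divisors, by the same two-variable identity. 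I would present the tensor-product argument as the main line since it is cleanest, mentioning that $K$ being étale over $\Q$ (a product of fields) makes the module theory transparent.
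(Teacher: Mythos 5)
Your argument is correct and is essentially the same as the paper's: tensor $\vphi$ with $\Q$ to get a $K$-linear endomorphism of $K=I\otimes\Q=J\otimes\Q$, observe it is multiplication by $\alpha=\tilde\vphi(1)$, and restrict back to $I$. The extra care you take in justifying $K$-linearity of $\vphi\otimes\Q$ (and the alternative element-wise argument) goes slightly beyond what the paper writes down, but the route is identical.
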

The previous lemma, whose proof is left to the reader, directly implies the following two Corollaries.
\begin{cor}
\label{cor:isolocal}
  Let $I$ and $J$ be two fractional $R$-ideals. Then we have a natural identification
  \[ \Hom_R(I,J) = (J:I). \]
  In particular, if $\p$ is a prime of $R$ then every $R_\p$-linear morphism $\vphi:I_\p\to J_\p$ is a multiplication by some $\alpha$ in the total quotient ring of $R_\p$.
\end{cor}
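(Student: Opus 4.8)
The plan is to obtain the identification directly from Lemma~\ref{lemma:isofracideals}. Given $\vphi\in\Hom_R(I,J)$, Lemma~\ref{lemma:isofracideals} provides $\alpha\in K$ with $\vphi(x)=\alpha x$ for all $x\in I$; this $\alpha$ is uniquely determined by $\vphi$, since $I$ spans $K$ as a $\Q$-vector space and so $\alpha I=0$ forces $\alpha=0$. The condition that $\vphi$ take values in $J$ is exactly $\alpha I\subseteq J$, that is, $\alpha\in(J:I)$, and conversely multiplication by any element of $(J:I)$ is an $R$-linear map $I\to J$. Hence $\vphi\mapsto\alpha$ and $\alpha\mapsto(x\mapsto\alpha x)$ are mutually inverse bijections between $\Hom_R(I,J)$ and $(J:I)$.

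To justify calling this an identification I would check that the bijection is additive and $R$-linear (in fact $(I:I)$- and $(J:J)$-linear), and that it is compatible with composition: the composite of the maps attached to $\alpha\in(J:I)$ and $\beta\in(L:J)$ is the map attached to $\beta\alpha\in(L:I)$. This exhibits it as an isomorphism natural in $I$ and $J$, matching the inclusions $(L:J)(J:I)\subseteq(L:I)$ and, in particular, the monoid structure on $\cI(R)$.

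For the last assertion, I would first note that $R_\p\otimes_\Z\Q=(R\otimes_\Z\Q)\otimes_R R_\p$ is a localization of $K$, hence again a finite product of fields, and so it is its own total quotient ring; call it $Q$. Since localizing at $\p$ is exact, $I_\p\otimes_\Z\Q=(I\otimes_\Z\Q)\otimes_R R_\p=K\otimes_R R_\p=Q$, and likewise for $J$. Thus $I_\p$ and $J_\p$ are fractional ideals of $R_\p$ in the appropriate sense, and the argument of Lemma~\ref{lemma:isofracideals} goes through verbatim with $R_\p,I_\p,J_\p,Q$ in place of $R,I,J,K$: an $R_\p$-linear map $\vphi:I_\p\to J_\p$ induces a $Q$-linear endomorphism of $Q$, necessarily multiplication by some element of $Q$, i.e. an element of the total quotient ring of $R_\p$. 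Alternatively, one may invoke that $\Hom$ commutes with localization for the finitely presented $R$-module $I$ over the Noetherian ring $R$, giving $\Hom_{R_\p}(I_\p,J_\p)=\Hom_R(I,J)_\p=(J:I)_\p=(J_\p:I_\p)\subseteq Q$.

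I do not expect a genuine obstacle here: the corollary is essentially a reformulation of Lemma~\ref{lemma:isofracideals}. The only points needing a little care are spelling out the naturality of the bijection (functoriality in $I$ and $J$ and compatibility with ideal multiplication), and checking that after localizing at $\p$ the modules $I_\p$ and $J_\p$ still behave like fractional ideals over $R_\p$ so that the argument of the lemma can be rerun — or, equivalently, justifying the commutation of $\Hom$ with localization. Both are routine.
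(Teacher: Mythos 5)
Your proposal is correct and follows essentially the same route as the paper: the identification $\Hom_R(I,J)=(J:I)$ is read off directly from Lemma~\ref{lemma:isofracideals}, and the local statement is reduced to the compatibility $(J:I)_\p=(J_\p:I_\p)$, which is exactly the observation the paper makes. The extra checks you list (uniqueness of $\alpha$, naturality, the structure of $R_\p\otimes_\Z\Q$) are fine but not needed beyond what the paper's two-line argument already contains.
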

\begin{cor}
\label{cor:idemodprinc}
  Two fractional $R$-ideals $I$ and $J$ are isomorphic if and only if there exists an $\alpha\in K^\times$ such that $I=\alpha J$.  
\end{cor}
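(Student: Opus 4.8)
The plan is to read this off from Lemma \ref{lemma:isofracideals}, which already tells us that any $R$-module morphism between fractional ideals is multiplication by a scalar in $K$; the only thing left to pin down is that the scalar realizing an \emph{isomorphism} lies in $K^\times$.

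For the ``if'' direction I would argue directly: if $I=\alpha J$ for some $\alpha\in K^\times$, then multiplication by $\alpha$ is a well-defined $R$-module homomorphism $J\to I$ (it is $R$-linear and lands in $I$ by construction), and multiplication by $\alpha^{-1}$ is a two-sided inverse, so it is an isomorphism. Hence $I\simeq J$.

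For the ``only if'' direction, suppose $\varphi\colon J\to I$ is an $R$-module isomorphism with inverse $\psi\colon I\to J$. By Lemma \ref{lemma:isofracideals}, $\varphi$ is multiplication by some $\alpha\in K$ and $\psi$ is multiplication by some $\beta\in K$. Since $\psi\circ\varphi=\mathrm{id}_J$ and $J\otimes_\Z\Q=K\neq 0$, we get $\beta\alpha=1$ in $K$, so $\alpha\in K^\times$ with $\alpha^{-1}=\beta$. Finally $I=\varphi(J)=\alpha J$, which is the desired conclusion. (Equivalently one can note that an injective multiplication-by-$\alpha$ map on a full-rank lattice forces $\alpha$ to be a non-zero divisor of the \'etale algebra $K$, hence a unit, but invoking the inverse map is cleaner.)

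There is no real obstacle here: the content is entirely in Lemma \ref{lemma:isofracideals}, and the corollary is a formal unwinding of ``isomorphism $=$ invertible scalar multiplication.'' The only point requiring a word of care is the passage from ``$\alpha\in K$ gives an injection'' to ``$\alpha\in K^\times$,'' which is handled by using that an isomorphism has an inverse that is itself a scalar multiplication.
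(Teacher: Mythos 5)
Your proof is correct and matches the paper's intent: the corollary is stated in the paper without proof, as an immediate consequence of Lemma \ref{lemma:isofracideals}, and your argument is exactly the expected unwinding (the inverse of the isomorphism is itself multiplication by a scalar, forcing $\alpha\in K^\times$). Nothing further is needed.
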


The group $\cP(R)$ of principal fractional $R$-ideals acts by multiplication on $\cI(R)$ and we have that 
  \[\ICM(R)=\faktor{\cI(R)}{\cP(R)}.\]
Observe that every fractional ideal in $\cP(R)$ is invertible (as a fractional $R$-ideal), so we can consider the quotient of invertible fractional $R$-ideals by $\cP(R)$, which will inherit a group structure.
\begin{df}
 Let $R$ be an order in $K$. The \emph{Picard group} of $R$ is
 \[\Pic(R)=\faktor{\set{\text{invertible }I \in \cI(R)}}{\cP(R)}.\]
\end{df}

Since being invertible is a property of the ideal class, we can conclude that $\Pic(R)\subseteq \ICM(R)$. Observe that equality holds if and only if $R=\cO_K$.

Since $\cO_K$ is a finite product of Dedekind domains, we have that every ideal can be written in a unique way as a product of prime ideals, see for example \cite[Theorem 22.24]{Reiner03}.
For every invertible fractional ideals of non-maximal order $R$, we can find an isomorphic one, say $I$, which is coprime with the conductor $\frf=(R:\cO_K)$. This implies that $I\cO_K \cap R = I$ and hence it follows that $I$
admits a factorization into a product of primes of $R$.
But this is not true if we look at non-invertible ideals.

It is easy to show that the multiplicator ring is an invariant of the ideal class.
\begin{lemma}
 Let $R$ be an order in $K$. If two fractional $R$-ideals $I$ and $J$ are isomorphic then they have the same multiplicator ring.
\end{lemma}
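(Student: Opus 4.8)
The plan is to use Corollary \ref{cor:idemodprinc}, which tells us that an isomorphism between fractional $R$-ideals $I$ and $J$ is realized by multiplication by some $\alpha \in K^\times$, i.e.\ $I = \alpha J$. The claim is then that $(I:I) = (J:J)$, and this should follow by a direct computation once we express everything in terms of $\alpha$.

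First I would invoke the corollary to write $I = \alpha J$ for a non-zero divisor $\alpha \in K^\times$ (it is a non-zero divisor because $I$, being a fractional ideal, contains a non-zero divisor, and $\alpha$ times such an element lies in $I \subseteq K$; alternatively $\alpha^{-1}$ exists in $K^\times$). Then I would compute the multiplicator ring of $I$ directly:
\[
(I:I) = (\alpha J : \alpha J) = \set{x \in K : x\alpha J \subseteq \alpha J} = \set{x \in K : xJ \subseteq J} = (J:J),
\]
where the middle equality uses that multiplication by $\alpha$ is a bijection on $K$ (since $\alpha \in K^\times$), so $x\alpha J \subseteq \alpha J$ if and only if $xJ \subseteq J$. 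This is essentially the whole argument.

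There is no real obstacle here; the statement is a routine consequence of the fact that isomorphisms of fractional ideals are just scalar multiplications (Lemma \ref{lemma:isofracideals} and Corollary \ref{cor:idemodprinc}) together with the invertibility of $\alpha$ in $K$. The only minor point to be careful about is justifying that $\alpha$ is invertible in $K$ rather than merely a non-zero divisor, but this is immediate: an isomorphism of $R$-modules has an inverse, which by the same lemma is multiplication by some $\beta \in K$, and $\alpha\beta$ acts as the identity on $I$, forcing $\alpha\beta = 1$ in $K$. Hence $\alpha \in K^\times$ and the displayed chain of equalities goes through.
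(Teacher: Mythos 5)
Your proof is correct and matches the paper's own argument: the paper likewise writes $I=xJ$ for $x\in K^\times$ via Lemma \ref{lemma:isofracideals} and concludes $(I:I)=(xJ:xJ)=(J:J)$ directly from the definition of the quotient ideal. Your extra care in justifying that $\alpha$ is a unit of $K$ is a fine (if not strictly necessary) addition.
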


It follows that
\begin{equation}
\label{eq:icmsuppics}
  \ICM(R) \supseteq \bigsqcup \Pic(S) 
\end{equation}
where the disjoint union is taken over the set of over-orders $S$ of $R$.

Recall that a commutative monoid is called \emph{Clifford} if it is a disjoint union of groups.
For other equivalent definitions of a commutative Clifford monoid see \cite[Section 1]{zz94} or \cite[Chapter IV]{Hel40}.
\begin{prop}
\label{prop:icmclifford}
 The following are equivalent:
 \begin{enumerate}[(a)]
  \item \label{prop:icmclifford:a} $R$ is Bass,
  \item \label{prop:icmclifford:b} the inclusion in \eqref{eq:icmsuppics} is an equality,
  \item \label{prop:icmclifford:c} $\ICM(R)$ is Clifford.
 \end{enumerate}
\end{prop}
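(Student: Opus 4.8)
The plan is to treat the inclusion \eqref{eq:icmsuppics} as an embedding of $\bigsqcup_S \Pic(S)$ into $\ICM(R)$ and to show that, when it is onto, it realizes the Clifford (maximal-subgroup) decomposition of $\ICM(R)$. I would begin with two structural observations. First, the idempotents of the monoid $\ICM(R)$ are exactly the classes $\isoclass{S}$ for $S$ an over-order of $R$: if $\isoclass{I}\isoclass{I}=\isoclass{I}$ then $I^2=\alpha I$ for some $\alpha\in K^\times$ by Corollary \ref{cor:idemodprinc}, so $\alpha^{-1}I$ is an idempotent fractional ideal and hence an over-order by Lemma \ref{lemma:idempfracid}. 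Second, under \eqref{eq:icmsuppics} the copy of $\Pic(S)$ sitting inside $\ICM(R)$ is precisely the maximal subgroup of $\ICM(R)$ with identity $\isoclass{S}$: the relation $\isoclass{S}\isoclass{I}=\isoclass{I}$ holds exactly when $\isoclass{I}$ has a representative that is a fractional $S$-ideal (namely $SI$), and given such a representative, $\isoclass{I}$ admits an inverse in $\ICM(R)$ whose product with it is $\isoclass{S}$ precisely when $I$ is invertible in $S$; by Lemma \ref{lemma:invidealmultring} such an $I$ has multiplicator ring $(I:I)=S$, which also shows the pieces $\Pic(S)$ are pairwise disjoint in $\ICM(R)$, since the multiplicator ring is an isomorphism invariant.

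With these in hand, the equivalence (a)$\Leftrightarrow$(b) is a direct application of Proposition \ref{prop:eqGorenstein}. For (a)$\Rightarrow$(b): for any fractional $R$-ideal $I$ put $S=(I:I)$; as $R$ is Bass, $S$ is Gorenstein, and $I$ is a fractional $S$-ideal with multiplicator ring $S$, so part (b) of Proposition \ref{prop:eqGorenstein} (applied with $S$ in place of $R$) forces $I$ to be invertible in $S$, i.e.\ $\isoclass{I}\in\Pic(S)$; hence \eqref{eq:icmsuppics} is an equality. For (b)$\Rightarrow$(a): take an over-order $S$ and a fractional $S$-ideal $J$ with $(J:J)=S$; regarding $J$ as a fractional $R$-ideal, equality in \eqref{eq:icmsuppics} together with the invariance of the multiplicator ring places $\isoclass{J}$ in $\Pic(S)$, so $J$ is isomorphic to an invertible $S$-ideal and hence is itself invertible in $S$; thus every over-order of $R$ satisfies Proposition \ref{prop:eqGorenstein}(b) and is Gorenstein, i.e.\ $R$ is Bass.

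For (b)$\Leftrightarrow$(c): if \eqref{eq:icmsuppics} is an equality then $\ICM(R)$ is the disjoint union of the subgroups $\Pic(S)$ — each is closed under the monoid product and is a group with unit $\isoclass{S}$ — so every element lies in a subgroup and $\ICM(R)$ is Clifford. Conversely, if $\ICM(R)$ is Clifford then, by the standard structure theory of commutative Clifford monoids (see \cite{zz94} or \cite{Hel40}), it is the disjoint union of its maximal subgroups, indexed by its idempotents; by the two structural observations above this decomposition coincides with $\bigsqcup_S\Pic(S)$, which is exactly statement (b).

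The only delicate point I anticipate is the second structural observation: identifying the maximal subgroup of $\ICM(R)$ at $\isoclass{S}$ with $\Pic(S)$ — and in particular checking that it is not larger — requires rephrasing the defining conditions of a maximal subgroup in terms of fractional $S$-ideals and invoking Lemma \ref{lemma:invidealmultring} to pin the multiplicator ring down to $S$, which is also what guarantees that the summands in \eqref{eq:icmsuppics} are genuinely disjoint. The remaining steps are routine once Proposition \ref{prop:eqGorenstein} and the definition of a Bass order are in place.
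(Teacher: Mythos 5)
Your proof is correct and follows essentially the same route as the paper: both arguments rest on identifying the idempotents of $\ICM(R)$ with the over-orders via Lemma \ref{lemma:idempfracid}, pinning down multiplicator rings with Lemma \ref{lemma:invidealmultring}, and invoking Proposition \ref{prop:eqGorenstein}. The only organizational difference is that the paper closes a cycle with a direct (c)$\Rightarrow$(a) step applied to the trace dual $T^t$ of an arbitrary over-order (using criterion (c) of Proposition \ref{prop:eqGorenstein}), whereas you route through (b) and test an arbitrary fractional $S$-ideal with multiplicator ring $S$ against criterion (b); your explicit identification of the maximal subgroup at $\isoclass{S}$ with $\Pic(S)$ is exactly the computation the paper leaves implicit when it asserts that membership in the group with unit $\isoclass{S}$ forces invertibility in $S$.
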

\begin{proof}
 \ref{prop:icmclifford:a}$\Rightarrow$\ref{prop:icmclifford:b}:
 If $R$ is Bass then every over-order is Gorenstein and in particular every fractional $R$-ideal $I$ is invertible in its own multiplicator ring $S$.
 This means that $\isoclass{I}$ is in $\Pic(S)$ and \ref{prop:icmclifford:b} holds.
 
 \ref{prop:icmclifford:b}$\Rightarrow$\ref{prop:icmclifford:c}: This is obvious.
 
 \ref{prop:icmclifford:c}$\Rightarrow$\ref{prop:icmclifford:a}: Write $\ICM(R)=\bigsqcup_e G_e$, where $e$ runs over the set of idempotent elements of $\ICM(R)$, and $G_e$ denotes the group with unit $e$.
 Let $J$ be a fractional $R$-ideal representing $e$.
 Then there exists $x\in K^\times$ such that $xJ^2=J$. Put $S=xJ$. Then
 \[S^2=x^2J^2=x(xJ^2)=xJ=S.\]
 Note that $S$ is another representative of the class $e$ and by Lemma \ref{lemma:idempfracid} it is an over-order of $R$.
  Now let $T$ be any over-order of $R$.
  We want to show that $T^t$ is invertible in $T$.
  Say that the class representing $T^t$ lies in $G_e$ where $e=\isoclass{S}$.
  Then $T^t$ is invertible in $S$ and, since the multiplicator ring of $T^t$ is $T$, by Lemma \ref{lemma:invidealmultring}, we have that $S=T$.
\end{proof}

\begin{remark}
\label{rmk:computePic}
  If $K=K_1\times \ldots \times K_r$, with $K_i$ number fields, then $\cO_K=\prod_i \cO_{K_i}$ and
  \[\Pic(\cO_K) = \Pic(\cO_{K_1}) \times \ldots \times \Pic(\cO_{K_r}). \]
  There are well known algorithms to compute each $\Pic(\cO_{K_i})$, see \cite{psh08}.
  Note that if $S$ is an over-order of $R$, then the extension map $I\mapsto IS$ induces a surjective group homomorphism $\Pic(R)\twoheadrightarrow \Pic(S)$, see for example \cite[Corollary 2.1.11]{dadetz62}.
  In particular, if $S=\cO_K$ we have an exact sequence 
  \begin{equation}
  \label{eq:Picexactseq}
  0 \to R^\times \to \cO_K^\times\to \dfrac{\left( \cO_K / \frf \right)^\times}{\left( R / \frf \right)^\times} \to \Pic(R) \to \Pic(\cO_K)\to 0,
  \end{equation}
  where $\frf$ is the \emph{conductor} of $R$. 
  The exactness of \eqref{eq:Picexactseq} is classical for the case when $r=1$, that is, when $K$ is a number field. 
  A proof for the case $r>1$ can be found in \cite{JP16}.
  The results contained in \cite{klupau05} describe how to compute the middle term of \eqref{eq:Picexactseq} in the case $r=1$ and they can be extended word-by-word to the general case.
  Since $\cO_K^\times=\prod_i\cO^\times_{K_i}$ and there are well known algorithms to compute each $\cO^\times_{K_i}$, we deduce that we can effectively compute $\Pic(R)$ and $R^\times$.
\end{remark}


\section{Weak equivalence classes}
\label{sec:wkclasses}


The following result was proved in \cite{dadetz62} in the particular case of an integral domain.
We include a proof for completeness.
\begin{prop}
\label{prop:weakeq}
 Let $I$ and $J$ be two fractional $R$-ideals. The following are equivalent:
 \begin{enumerate}[(a)]
  \item \label{prop:weakeq:1}$I_\p$ and $J_\p$ are isomorphic for every prime $\p$ of $R$;
  \item \label{prop:weakeq:2} $1\in (I:J)(J:I)$;
  \item \label{prop:weakeq:3} $I$ and $J$ have the same multiplicator ring, say $S$, and there exists an invertible fractional $S$-ideal $L$ such that $I=LJ$.
 \end{enumerate}
\end{prop}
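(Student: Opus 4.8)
The plan is to prove the cyclic chain \ref{prop:weakeq:1} $\Rightarrow$ \ref{prop:weakeq:2} $\Rightarrow$ \ref{prop:weakeq:3} $\Rightarrow$ \ref{prop:weakeq:1}, using throughout the identification $\Hom_R(I,J)=(J:I)$ of Corollary \ref{cor:isolocal}, the fact that an $R$-module isomorphism is multiplication by an element of the total quotient ring, and the compatibility of sums, products, localization and ideal quotients, i.e. $(IJ)_\p = I_\p J_\p$ and $(I:J)_\p=(I_\p:J_\p)$ for every prime $\p$ of $R$.

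For \ref{prop:weakeq:1} $\Rightarrow$ \ref{prop:weakeq:2}, set $\mathfrak a = (I:J)(J:I)$. Since $\mathfrak a$ is an $R$-submodule of $K$, the condition $1\in\mathfrak a$ is equivalent to $R\subseteq\mathfrak a$, and the cyclic $R$-module $R/(R\cap\mathfrak a)$ vanishes if and only if it vanishes after localizing at every prime $\p$; so it suffices to show $R_\p\subseteq\mathfrak a_\p$ for all $\p$. Fix $\p$. By hypothesis $I_\p\simeq J_\p$, so by Corollary \ref{cor:isolocal} there is an $\alpha$ in the total quotient ring of $R_\p$ with $I_\p=\alpha J_\p$; as $I_\p$ and $J_\p$ are fractional ideals, $\alpha$ is a non-zero divisor, hence invertible there, with $\alpha\in(I:J)_\p$ and $\alpha^{-1}\in(J:I)_\p$. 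Then $1=\alpha\alpha^{-1}\in\mathfrak a_\p$, proving \ref{prop:weakeq:2}.

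For \ref{prop:weakeq:2} $\Rightarrow$ \ref{prop:weakeq:3}, write $S=(I:I)$, $T=(J:J)$, $L=(I:J)$. From $1\in L(J:I)$ together with $(J:I)I\subseteq J$ and $LJ\subseteq I$ we get $I=1\cdot I\subseteq L(J:I)I\subseteq LJ\subseteq I$, forcing $LJ=I$ and $L(J:I)I=I$; symmetrically $(J:I)I=J$, and then $L(J:I)J=(J:I)(LJ)=(J:I)I=J$. Thus $L(J:I)$ fixes $I$ and $J$, so $L(J:I)\subseteq S\cap T$, while $1\in L(J:I)$. Now $L=(I:J)$ is stable under multiplication by $S$ and by $T$ (if $sI\subseteq I$, $tJ\subseteq J$ and $xJ\subseteq I$ then $(sx)J\subseteq sI\subseteq I$ and $(tx)J=x(tJ)\subseteq xJ\subseteq I$), and likewise $(J:I)$ is both an $S$- and a $T$-module; hence $L(J:I)$ is an ideal of $S$ containing the unit of $S$, so $L(J:I)=S$, and by the same argument $L(J:I)=T$. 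Therefore $S=T=L(J:I)$: $I$ and $J$ have the common multiplicator ring $S$, and $L$ is invertible in $S$ with inverse $(J:I)$ and $I=LJ$, which is \ref{prop:weakeq:3}.

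For \ref{prop:weakeq:3} $\Rightarrow$ \ref{prop:weakeq:1}, fix a prime $\p$ of $R$ and localize $I=LJ$ to $I_\p=L_\p J_\p$. The ring $S_\p=S\otimes_R R_\p$ is finite over the local ring $R_\p$, hence has only finitely many maximal ideals, and invertibility of $L$ in $S$ localizes to invertibility of $L_\p$ in $S_\p$; by Lemma \ref{lemma:invertilbeprincipal} we get $L_\p=\alpha S_\p$ for a non-zero divisor $\alpha$. Since $J$ is an $S$-module, $S_\p J_\p=J_\p$, so $I_\p=\alpha J_\p\simeq J_\p$, closing the cycle. The main obstacle is \ref{prop:weakeq:2} $\Rightarrow$ \ref{prop:weakeq:3}: the whole point is to trap $L(J:I)$ between $S\cap T$ and the units $1_S,1_T$ and to recognize it simultaneously as an ideal of $S$ and of $T$, which is exactly what forces the two multiplicator rings to coincide; the subtlety in \ref{prop:weakeq:3} $\Rightarrow$ \ref{prop:weakeq:1} is only that we localize at primes of $R$ rather than of $S$, handled by the semilocality of $S_\p$ via Lemma \ref{lemma:invertilbeprincipal}.
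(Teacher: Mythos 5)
Your proof is correct, and implications \ref{prop:weakeq:1}$\Rightarrow$\ref{prop:weakeq:2} and \ref{prop:weakeq:2}$\Rightarrow$\ref{prop:weakeq:3} follow essentially the same path as the paper: locally an isomorphism is multiplication by a non-zero divisor $\alpha$, which puts $1=\alpha\alpha^{-1}$ into $((I:J)(J:I))_\p$; and then the product $(I:J)(J:I)$ is trapped between $\{1\}$ and $(I:I)\cap(J:J)$ while carrying both module structures, which forces $(I:I)=(I:J)(J:I)=(J:J)$ and $I=(I:J)J$. Where you genuinely diverge is \ref{prop:weakeq:3}$\Rightarrow$\ref{prop:weakeq:1}. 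The paper first lifts $L$ along the surjection $\Pic(R)\twoheadrightarrow\Pic(S)$ to an invertible fractional $R$-ideal $L'$ with $L'S=L$, and then applies Lemma \ref{lemma:invertilbeprincipal} to the local ring $R_\p$ to get $L'_\p=xR_\p$ and hence $I_\p=xJ_\p$; this imports the nontrivial surjectivity of the Picard extension map from Remark \ref{rmk:computePic}. You instead apply Lemma \ref{lemma:invertilbeprincipal} directly to the semilocal ring $S_\p=S\otimes_R R_\p$ (finite over the local ring $R_\p$, hence with finitely many maximal ideals), obtaining $L_\p=\alpha S_\p$ and $I_\p=\alpha S_\p J_\p=\alpha J_\p$. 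Your route is more self-contained, needing only semilocality of $S_\p$ and the fact that invertibility and the relation $SJ=J$ localize; the paper's route has the side benefit of exhibiting a single global invertible $R$-ideal $L'$ realizing the weak equivalence, which is the form in which the statement gets reused later (e.g.\ in the proof of Theorem \ref{thm:icmcomputation}). Both are valid.
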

\begin{proof}
 \ref{prop:weakeq:1}$\Rightarrow$\ref{prop:weakeq:2}: Let $\p$ be a prime of $R$. By Corollary \ref{cor:isolocal} there exists a non-zero-divisor $x$ in the total quotient ring of $R_\p$ such that $I_\p=xJ_\p$, which in turn implies that
 \[(I_\p:J_\p)=(xJ_\p:J_\p)=x(J_\p:J_\p)\quad\text{and}\quad (J_\p:I_\p)=(J_\p:xJ_\p)=\frac 1x(J_\p:J_\p). \]
 Therefore
 \[ ((I:J)(J:I))_\p=(I_\p:J_\p)(J_\p:I_\p)=x(J_\p:J_\p)\frac 1x(J_\p:J_\p)=(J_\p:J_\p), \]
 which clearly contains $1$.
 Hence, the natural inclusion $(J:I)(I:J)\subseteq (J:J)$ is locally surjective at $\p$. Since the choice of $\p$ was arbitrary we conclude that $(J:I)(I:J) = (J:J)$ and in particular that $1\in (J:I)(I:J)$. 
 
 \ref{prop:weakeq:2}$\Rightarrow$\ref{prop:weakeq:3}:  
 By definition of quotient ideal we have that $(I:J)(J:I)\subseteq (I:I)$ and that $(I:J)(J:I)\subseteq (J:J)$.
 Since $(I:J)(J:I)$ has a structure of both $(I:I)$ and $(J:J)$-module and contains $1$, it follows that
 \[ (I:I)=(I:J)(J:I)=(J:J), \]
 that is, $I$ and $J$ have the same multiplicator ring and $(I:J)$ and $(J:I)$ are inverse to each other.
 The following inclusions
 \[ I = I(I:I)=I(I:J)(J:I)\subseteq J(I:J)\subseteq I\]
 are therefore equalities and in particular $I=LJ$ for $L=(I:J)$.
 
 \ref{prop:weakeq:3}$\Rightarrow$\ref{prop:weakeq:1}: Let $L'$ be any invertible ideal in $R$ such that $L'S=L$.
 Note that such an $L'$ exists since the extension map $\Pic(R)\to \Pic(S)$ is surjective, as we explain in Remark~\ref{rmk:computePic}.
 The localization $L'_\p$ at any prime $\p$ of $R$ is principal by Lemma \ref{lemma:invertilbeprincipal}, say $L'_\p=xR_\p$. Then $I_\p=xJ_\p$ and hence $I_\p \simeq J_\p$.
\end{proof}

\begin{df}
   If two fractional $R$-ideals $I$ and $J$ satisfy the equivalent conditions of Proposition \ref{prop:weakeq} we say that they are \emph{weakly equivalent}.
   Denote by $\Wk(R)$ the set of weak equivalence classes and by $\wkclass{I}$ the weak equivalence class of a fractional $R$-ideal $I$.
   Given any over-order $S$ of $R$ let $\bWk{S}(R)
   $ be the subset of $\Wk(R)$ consisting of the weak equivalence classes $\wkclass{I}$ such that $(I:I)=S$.
\end{df}
Note that $\Wk(R)$ inherits the structure of a commutative monoid from $\cI(R)$.
Consider the partition
\[\Wk(R) = \bigsqcup \bWk{S}(R),\]
where the disjoint union is taken over all the over-orders $S$ of $R$.
By Proposition \ref{prop:weakeq}.\ref{prop:weakeq:2} an ideal is invertible if and only if it is weakly equivalent to its multiplicator ring and hence we have that $\bWk{S}(R)=\set{\wkclass{S}}$ if and only if $S$ is Gorenstein.

\begin{remark}
   Let $p$ be a rational prime number and put $R_{(p)}=R\otimes_\Z \Z_{(p)}$. Similarly, for fractional $R$-ideals $I$ and $J$, put $I_{(p)}:=I\otimes_\Z \Z_{(p)}$ and $J_{(p)}:=J\otimes_\Z \Z_{(p)}$. The ideals $I$ and $J$ are said to belong to the same \emph{genus} if and only if $I_{(p)}$ and $J_{(p)}$ are isomorphic as $R_{(p)}$-modules for every rational prime $p$.
   Note that $R_{(p)}$ is a semi-local ring and hence by Lemma \ref{lemma:invertilbeprincipal} fractional ideals are invertible if and only if they are principal and generated by a non-zero-divisor. An easy modification of the proof of Proposition \ref{prop:weakeq} shows that $I$ and $J$ are weakly equivalent if and only if they belong to the same genus. This equivalence was already noticed in \cite[Section 5]{LevyWiegand85}.
   The notion of genus is classical and it has been widely studied in the literature, see for example \cite[Section 7]{Reiner03} and \cite[Section 6]{Reiner70}.
   For example, it is known that $I$ and $J$ are in the same genus if and only if they are isomorphic after tensoring with the $p$-adic completion $\Z_p$, which in turn holds if and only if the quotients $I/p^k I$ and $J/p^k J$ are isomorphic for an integer $k$, that only depends on $R_{(p)}$.
   We prefer to work with the notion of weak equivalence introduced above, since part \ref{prop:weakeq:2} of Proposition \ref{prop:weakeq} implies that checking whether two ideals are weakly equivalent can be performed in polynomial time.
\end{remark}

\begin{cor}
 Let $I$ and $J$ be fractional ideals. Then
 \[ \isoclass{I}=\isoclass{J} \Longleftrightarrow \isoclass{I^t}=\isoclass{J^t} \]
 and
 \[ \wkclass{I}=\wkclass{J} \Longleftrightarrow \wkclass{I^t}=\wkclass{J^t}. \]
\end{cor}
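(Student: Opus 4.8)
The plan is to leverage the last bullet of the unnumbered lemma on properties of the trace dual ideal, namely the formula $(x I)^t = \frac{1}{x} I^t$, which says precisely that the operation $I \mapsto I^t$ intertwines multiplication by $x \in K^\times$ with multiplication by $x^{-1} \in K^\times$. This single identity does essentially all the work for the first equivalence. For the second equivalence I would instead use the identity $(I:J) = (J^t : I^t)$ from the same lemma, together with the characterization of weak equivalence given in Proposition~\ref{prop:weakeq}\ref{prop:weakeq:2}.

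First I would prove the statement about isomorphism classes. By Corollary~\ref{cor:idemodprinc}, $\isoclass{I} = \isoclass{J}$ holds if and only if there exists $\alpha \in K^\times$ with $I = \alpha J$. Applying the trace dual and the identity $(\alpha J)^t = \frac{1}{\alpha} J^t$, we get $I^t = \alpha^{-1} J^t$, and since $\alpha^{-1} \in K^\times$, Corollary~\ref{cor:idemodprinc} gives $\isoclass{I^t} = \isoclass{J^t}$. The converse follows by the same argument applied to $I^t$ and $J^t$, using the involutivity $(I^t)^t = I$ recorded in the lemma; alternatively one simply notes the argument is symmetric once one observes $t$ is a bijection on $\cI(R)$.

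Next I would handle the weak equivalence statement. By Proposition~\ref{prop:weakeq}\ref{prop:weakeq:2}, $\wkclass{I} = \wkclass{J}$ is equivalent to $1 \in (I:J)(J:I)$. Using $(I:J) = (J^t : I^t)$ and $(J:I) = (I^t : J^t)$, we have $(I:J)(J:I) = (J^t:I^t)(I^t:J^t) = (I^t:J^t)(J^t:I^t)$ by commutativity of ideal multiplication, so $1 \in (I:J)(J:I)$ if and only if $1 \in (I^t:J^t)(J^t:I^t)$, which by Proposition~\ref{prop:weakeq}\ref{prop:weakeq:2} again is exactly $\wkclass{I^t} = \wkclass{J^t}$.

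I do not expect any serious obstacle here: both equivalences reduce to a direct substitution into an algebraic identity already proved in the trace dual lemma, combined with the monoid-quotient descriptions of $\ICM(R)$ and $\Wk(R)$ established via Corollary~\ref{cor:idemodprinc} and Proposition~\ref{prop:weakeq}. The only point requiring a moment's care is making sure one invokes the correct form of each trace-dual identity — in particular that $(I:J) = (J^t:I^t)$ rather than $(I:J) = (I^t J)^t$ — but this is bookkeeping rather than a genuine difficulty. One could also streamline the exposition by proving a single clause and remarking that the other follows by the symmetry of $t$ and the involution $(I^t)^t = I$.
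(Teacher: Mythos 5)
Your proposal is correct and follows essentially the same route as the paper: the first equivalence via $(\alpha J)^t=\alpha^{-1}J^t$ together with Corollary~\ref{cor:idemodprinc}, and the second via the identity $(I:J)(J:I)=(J^t:I^t)(I^t:J^t)$ combined with Proposition~\ref{prop:weakeq}\ref{prop:weakeq:2}. No gaps; your version merely spells out the substitutions in a bit more detail than the paper does.
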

\begin{proof}
 Note that $I=xJ$ if only if $I^t=(1/x)J^t$, which gives the first equivalence.
 The second equivalence follows from the equality $(I:J)(J:I)=(J^t:I^t)(I^t:J^t)$ and part \ref{prop:weakeq:2} of Proposition \ref{prop:weakeq}.
\end{proof}

Note that two invertible fractional $R$-ideals $I$ and $J$ are isomorphic if and only if $IJ^{-1}$ is a principal fractional $R$-ideal.
Since being weakly equivalent is a necessary condition for being isomorphic, we can also reduce the isomorphism problem between non-invertible ideals to a principal ideal problem.

\begin{cor}
\label{cor:isom_test}
 Let $I$ and $J$ be two weakly equivalent fractional $R$-ideals, and let $S$ be their multiplicator ring. Then
 \[I = (I:J) J,\]
 and $(I:J)$ is an invertible fractional $S$-ideal.
 In particular, $I\simeq J$ if and only if $(I:J)$ is a principal fractional $S$-ideal.
\end{cor}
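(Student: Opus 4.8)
The plan is to deduce this corollary directly from Proposition \ref{prop:weakeq}, specifically from the equivalence of parts \ref{prop:weakeq:2} and \ref{prop:weakeq:3}, together with Corollary \ref{cor:idemodprinc}. First I would invoke Proposition \ref{prop:weakeq}.\ref{prop:weakeq:3}: since $I$ and $J$ are weakly equivalent with common multiplicator ring $S$, there is an ideal $L$ invertible in $S$ with $I = LJ$. The content of the first displayed equation is the identification $L = (I:J)$, which is exactly what was established inside the proof of the implication \ref{prop:weakeq:2}$\Rightarrow$\ref{prop:weakeq:3}: the chain $I = I(I:I) = I(I:J)(J:I) \subseteq J(I:J) \subseteq I$ forces $I = (I:J)J$, and simultaneously shows that $(I:J)$ and $(J:I)$ are mutually inverse fractional $S$-ideals, so $(I:J)$ is invertible in $S$.

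Next I would establish the final biconditional. If $(I:J)$ is a principal fractional $S$-ideal, say $(I:J) = \alpha S$ with $\alpha \in K^\times$, then $I = (I:J)J = \alpha S J = \alpha J$ (using $SJ = J$, which holds because $S = (J:J)$), so $I \simeq J$ by Corollary \ref{cor:idemodprinc}. Conversely, if $I \simeq J$, then by Corollary \ref{cor:idemodprinc} there is $\alpha \in K^\times$ with $I = \alpha J$; then $(I:J) = (\alpha J : J) = \alpha (J:J) = \alpha S$, again by the standard behaviour of the quotient ideal under scaling, so $(I:J)$ is principal as a fractional $S$-ideal. This closes the loop.

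There is essentially no serious obstacle here: every ingredient has already been proved. The only point requiring a little care is bookkeeping about which ring the ideal $(I:J)$ should be regarded as a module over — one must be clear that "principal fractional $S$-ideal" means of the form $\alpha S$, and that $(I:J)$ really is an $S$-module (which follows since $S \subseteq (I:I)$ and $(I:J)$ is an $(I:I)$-module, or directly from $(I:J)(J:I) = S$). I expect the write-up to be just a few lines, mostly quoting the internal computation from the proof of Proposition \ref{prop:weakeq} rather than repeating it.

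\begin{proof}
 Since $I$ and $J$ are weakly equivalent with common multiplicator ring $S$, the implication \ref{prop:weakeq:2}$\Rightarrow$\ref{prop:weakeq:3} of Proposition \ref{prop:weakeq}, and in particular the chain of inclusions displayed in its proof, shows that $(I:J)(J:I)=S$ and that $I=(I:J)J$. In particular $(I:J)$ is invertible in $S$, with inverse $(J:I)$. If $(I:J)$ is a principal fractional $S$-ideal, write $(I:J)=\alpha S$ with $\alpha \in K^\times$; then $I=(I:J)J=\alpha SJ=\alpha J$, since $SJ=J$ because $S=(J:J)$, so $I\simeq J$ by Corollary \ref{cor:idemodprinc}. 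Conversely, if $I\simeq J$ then by Corollary \ref{cor:idemodprinc} there is $\alpha\in K^\times$ with $I=\alpha J$, whence $(I:J)=(\alpha J:J)=\alpha (J:J)=\alpha S$ is principal as a fractional $S$-ideal.
\end{proof}
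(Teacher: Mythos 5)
Your proof is correct and follows essentially the same route as the paper: both cite the internal computation from the proof of Proposition \ref{prop:weakeq} to get $I=(I:J)J$ with $(I:J)$ invertible in $S$, then handle the biconditional via the scaling identity $(\alpha J:J)=\alpha S$. Your write-up is just slightly more explicit about the step $SJ=J$.
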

\begin{proof}
 Let $S$ be the multiplicator ring of $I$ and $J$.
 If $I$ and $J$ are weakly equivalent, we show in the proof of Proposition \ref{prop:weakeq} that $(I:J)$ is an invertible fractional $S$-ideal and $I=(I:J)J$. 
 In particular, if $(I:J)$ is a principal fractional $S$-ideal, then $I$ and $J$ are isomorphic.
 
 Conversely, if $I=xJ$ for some $x\in K^\times$ then
 \[ (I:J)=(xJ:J)=x(J:J)=xS, \]
 which concludes the proof.
\end{proof}

Finally, knowing the weak equivalence classes allows us to reconstruct the isomorphism classes.
Let $S$ be an over-order of $R$ and define
\[\bICM{S}(R)=\set{\isoclass{I}\in \ICM(R) \text{ s.t. } (I:I)=S},\]
so that we get
\[\ICM(R)=\bigsqcup \bICM{S}(R),\]
where the disjoint union is taken over all the over-orders $S$ of $R$.

\begin{thm}
\label{thm:icmcomputation}
 Let $R$ be an order in $K$. For every over-order $S$ of $R$, the action of $\Pic(S)$ on $\bICM{S}(R)$ induced by ideal multiplication is free and
 \[\bWk{S}(R)=\faktor{\bICM{S}(R)}{\Pic(S)}.\]
 More concretely, if
 \[\bWk{S}(R)=\set{\wkclass{I_1},\ldots,\wkclass{I_r}} \qquad \text{ and } \qquad \Pic(S)=\set{\isoclass{J_1},\ldots,\isoclass{J_s}},\]
 with the $I_i$'s pairwise not weakly equivalent and the $J_j$'s pairwise not isomorphic then
 \[\bICM{S}(R)=\set{\isoclass{I_iJ_j}:1\leq i \leq r, 1\leq j \leq s}\]
 and the fractional ideals $I_iJ_j$ are pairwise not isomorphic.
\end{thm}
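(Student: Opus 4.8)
The plan is to prove three things in sequence: (1) the action of $\Pic(S)$ on $\bICM(S)$ is well-defined, (2) it is free, and (3) its orbit set is exactly $\bWk(S)$; the concrete description of $\bICM(S)$ then follows immediately by counting orbits.

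First I would check the action is well-defined. Given $\isoclass{I}\in\bICM(S)$, so $(I:I)=S$, and an invertible $S$-ideal $J$, I need $\isoclass{JI}\in\bICM(S)$, i.e.\ $(JI:JI)=S$. Since $J$ is invertible in $S$, Lemma~\ref{lemma:invidealmultring} gives $(J:J)=S$, and one computes $(JI:JI)=(I:I)=S$ directly using that $J$ has an inverse $J^{-1}=(S:J)$ in $S$: indeed $x(JI)\subseteq JI$ implies $xI=xJ^{-1}JI\subseteq J^{-1}JI=I$, and conversely. That $JI$ depends only on $\isoclass{J}$ and $\isoclass{I}$ is clear from Corollary~\ref{cor:idemodprinc}, and a principal $S$-ideal $\alpha S$ sends $\isoclass{I}$ to $\isoclass{\alpha I}=\isoclass{I}$, so we get an action of the group $\Pic(S)$.

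Next, freeness: suppose $\isoclass{JI}=\isoclass{I}$ for some $\isoclass{I}\in\bICM(S)$ and $\isoclass{J}\in\Pic(S)$. By Corollary~\ref{cor:idemodprinc} there is $\alpha\in K^\times$ with $JI=\alpha I$. Multiplying by $J^{-1}$ gives $I = \alpha J^{-1}I$, so $\alpha^{-1}I = J^{-1}I \subseteq J^{-1}(S:S^t)\cdots$ — more cleanly: from $JI=\alpha I$ I get $J = (JI:I) = (\alpha I:I) = \alpha(I:I) = \alpha S$, using Corollary~\ref{cor:isolocal} and that $I$ has multiplicator ring $S$ so $(JI:I)$ is computed inside $S$; the key point is $(I:I)=S$. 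Hence $J$ is principal in $S$, i.e.\ $\isoclass{J}$ is the identity of $\Pic(S)$. (The step $(JI:I)=J$ needs $J$ invertible: $(JI:I)\supseteq J$ trivially, and $x(JI:I)\subseteq ?$ — one shows $(JI:I)=J(I:I)=JS=J$ by localizing, since $J$ is locally principal by Lemma~\ref{lemma:invertilbeprincipal}, or by a direct cancellation argument multiplying by $J^{-1}$.)

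For the orbit identification: two ideals $I,I'$ with multiplicator ring $S$ lie in the same $\Pic(S)$-orbit on $\bICM(S)$ iff $I' \simeq JI$ for some invertible $S$-ideal $J$, iff (after adjusting by a principal ideal) $I' = JI$ for some $J$ invertible in $S$ — which by Proposition~\ref{prop:weakeq}(\ref{prop:weakeq:3}) is exactly the condition that $I$ and $I'$ be weakly equivalent. This gives the bijection $\bWk(S) = \bICM(S)/\Pic(S)$. Finally, choosing weak-equivalence-class representatives $I_1,\dots,I_r$ and Picard-class representatives $J_1,\dots,J_s$, every $\isoclass{I}\in\bICM(S)$ is $\isoclass{I_iJ_j}$ for a unique pair $(i,j)$: surjectivity because each orbit meets $\{I_i\}$ and each orbit is $\{\isoclass{I_iJ_j}\}_j$; distinctness because $\isoclass{I_iJ_j}=\isoclass{I_{i'}J_{j'}}$ forces $\wkclass{I_i}=\wkclass{I_{i'}}$ hence $i=i'$ (weak equivalence is necessary for isomorphism, and the $I_i$ are pairwise not weakly equivalent), and then freeness of the action forces $\isoclass{J_j}=\isoclass{J_{j'}}$ hence $j=j'$.

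The main obstacle, and the only step requiring genuine care, is the cancellation identity $(JI:I)=J$ when $J$ is invertible in $S=(I:I)$: this underlies both well-definedness and freeness. I would handle it by localizing at each prime $\p$ of $R$, where $J_\p$ is principal generated by a non-zero divisor (Lemma~\ref{lemma:invertilbeprincipal}), reducing to the trivial computation $(xI_\p:I_\p)=x(I_\p:I_\p)=xR_\p\cdot$ — more precisely $(xI_\p:I_\p)=xS_\p=J_\p$ — and then concluding globally since a containment of fractional ideals that is an equality at every prime is an equality. Everything else is formal manipulation of quotient ideals together with Corollaries~\ref{cor:idemodprinc} and~\ref{cor:isolocal} and Proposition~\ref{prop:weakeq}.
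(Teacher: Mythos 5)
Your proposal is correct and follows essentially the same route as the paper: the orbit identification and surjectivity come from Proposition~\ref{prop:weakeq}(\ref{prop:weakeq:3}), and freeness comes from a cancellation statement for an ideal invertible in $S=(I:I)$, proved by localizing and invoking Lemma~\ref{lemma:invertilbeprincipal}. The only cosmetic difference is that you package the cancellation as the identity $(JI:I)=J$ (deducing $J=\alpha S$), whereas the paper reduces to $I=IJ$ and extracts a unit $y\in S_\p^\times$ locally; the underlying argument is the same.
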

\begin{proof}
 Let $I$ be a fractional $R$-ideal with multiplicator ring $S$. 
 Then $\wkclass{I}=\wkclass{I_i}$ for some $i$, that is, there exists an invertible fractional $S$-ideal $J$ such that $I=I_iJ$.
 Let $j$ be the index such that $\isoclass J = \isoclass{J_j}$. 
 It follows that $\isoclass{I}=\isoclass{I_i J_j}$. 
 It remains to prove that if $\isoclass{I_iJ_j}=\isoclass{I_kJ_h}$, that is, $I_iJ_j=xI_kJ_h$ for some $x\in K^\times$, then $i=k$ and $j=h$.
 Multiplying by $(S:J_j)$ on both sides, we get by Proposition \ref{prop:weakeq}.\ref{prop:weakeq:3} that $I_i$ is weakly equivalent to $I_k$, that is, $i=k$.
 To conclude, it is enough to prove that if $I=IJ$ with $I$ and $J$ both having multiplicator ring $S$ and $J$ invertible, then $J=S$.
 We will prove that this is true locally at every prime of $S$.
 Since $J$ is invertible, we have by Lemma \ref{lemma:invertilbeprincipal} that $J_\p=yS_\p$ for some non-zero-divisor $y$.
 Hence it follows that
 \[I_\p=yS_\p I_\p=yI_\p\]
 which implies that both $y$ and $1/y$ are in $(I_\p:I_\p)=S_\p$. Therefore we again have $J_\p=S_\p$.
\end{proof}

\begin{remark}
 Fixing the multiplicator ring is a key point in using the previous proposition.
 Let $R=\Z[\alpha]$, where $\alpha$ is a root of $f(x)=x^2-8x-8$. Note that $\cO_K=\Z[\alpha/2]$ and $[\cO_K:R]=2$.
 Consider the invertible $R$-ideal $\p=(3,\alpha-1)$ and the conductor $\frf=(2,\alpha)$ of $R$.
 It is easy to verify that $\Pic(\cO_K)$ is trivial, while $\Pic(R)\simeq \Z/2\Z$ with generator the ideal class of $\p$.
 It follows that the product $\frf \p $ is isomorphic to $\frf$ and, in particular, that the action of $\Pic(R)$ on the whole $\ICM(R)$ is not free.
\end{remark}

Using Theorem \ref{thm:icmcomputation} we can compute the ideal class monoid of an order $R$ if we know all its over-orders, their Picard groups and the weak equivalence class monoid.
For the first issue, by Lemma \ref{lemma:idempfracid}, it is enough to look at the idempotent $R$-modules of the finite quotient $\cO_K/R$.
In the end of Section \ref{sec:idealclasses} we discussed how to compute the Picard group of a possibly non-maximal order.
Finally, in the next section we will describe how to compute $\Wk(R)$.
See Section \ref{sec:algorithms} for the corresponding algorithms.


\section{Computing the weak equivalence class monoid}
\label{sec:computingwkclasses}


The following results are inspired by \cite{dadetz62} where the authors produce similar results in the particular case of an integral domain.
Let $R$ be an order in $K$.
Recall that we can partition $\Wk(R)$ as the disjoint union of $\bWk{S}(R)$ where $S$ runs through the set of over-orders of $R$.
We will now describe a method to compute $\bWk{S}(R)$.
Observe that when $S$ is not Gorenstein there are always at least two distinct classes in $\bWk{S}(R)$, namely $\wkclass{S}$ and $\wkclass{S^t}$.
\begin{prop}
\label{prop:wkbar}
 Let $T$ be any over-order of $S$ such that $S^tT$ is invertible as a fractional $T$-ideal.
 Let $\frf$ be an ideal contained in $S$ such that $T \subseteq (\frf:\frf)$.
 Then every class in $\bWk{S}(R)$ has a representative $I$ satisfying $\frf \subseteq I \subseteq T$.
\end{prop}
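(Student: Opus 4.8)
The plan is to prove that every class in $\bWk(S)$ has a representative $I$ with $IT=T$, and then to observe that any such $I$ automatically satisfies $\frf\subseteq I\subseteq T$. So start from an arbitrary representative, still denoted $I$, so that $(I:I)=S$ and therefore $II^t=S^t$. The first step is to check that the fractional $T$-ideal $IT$ is invertible in $T$: since $TT=T$ one has
\[
  (IT)(I^tT)=II^t\,T=S^tT,
\]
and $S^tT$ is invertible in $T$ by hypothesis, say with $(S^tT)(T:S^tT)=T$; hence $(IT)\bigl((I^tT)(T:S^tT)\bigr)=T$ with the second factor a fractional $T$-ideal, so $IT$ is invertible in $T$.

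Next I would pull this back to a genuine representative of the class. By Remark \ref{rmk:computePic} the extension map $\Pic(S)\to\Pic(T)$ is surjective, so there is a fractional $S$-ideal $L$, invertible in $S$, whose extension $LT$ is isomorphic to $IT$; by Corollary \ref{cor:idemodprinc} we may rescale $L$ by an element of $K^\times$ so that $LT=IT$. Put $I':=(S:L)I$. Since $(S:L)$ is invertible in $S=(I:I)$, Proposition \ref{prop:weakeq} shows that $I'$ is weakly equivalent to $I$ (so $(I':I')=S$ as well), while
\[
  I'T=(S:L)\,IT=(S:L)\,LT=\bigl((S:L)L\bigr)T=ST=T.
\]

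Finally, from $I'T=T$ both inclusions come out formally. On the one hand $I'\subseteq I'T=T$. On the other hand $T\subseteq(\frf:\frf)$ forces $T\frf=\frf$, and $\frf\subseteq S$, so
\[
  \frf=T\frf=(I'T)\frf=I'(T\frf)=I'\frf\subseteq I'S=I',
\]
the last inclusion because $I'$ is an $S$-module. Hence $I'$ is the desired representative. The only point that calls for foresight rather than computation is the decision to aim for a representative with $I'T=T$: one cannot in general move $I$ into the interval $[\frf,T]$ by multiplying by a single scalar, and it is precisely the surjectivity of $\Pic(S)\to\Pic(T)$ that yields an ideal whose $T$-extension is the whole of $T$, after which the sandwich $\frf\subseteq I'\subseteq T$ is automatic.
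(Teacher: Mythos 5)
Your argument is correct and follows essentially the same route as the paper: both use $II^t=S^t$ to see that $IT$ is invertible in $T$, then use the surjectivity of $\Pic(S)\to\Pic(T)$ to adjust $I$ by an ideal invertible in $S$ so that the new representative $I'$ satisfies $I'T=T$, after which $I'\subseteq T$ and $\frf=\frf I'\subseteq I'$ follow formally. The only cosmetic difference is that the paper multiplies by a preimage of the class of $(T:IT)$ while you multiply by $(S:L)$ for $L$ a preimage of the class of $IT$, which amounts to the same correction.
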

\begin{proof}
 Let $I'$ be any fractional ideal with $(I':I')=S$.
 By Lemma \ref{tracedualmultiplicatorring} we have that $I'(I')^tT=S^tT$ and hence it follows that $I'T$ is an invertible fractional $T$-ideal.
 Let $J$ be a representative of the pre-image under the surjective map $\Pic(S)\to\Pic(T)$ of the class of $(T:I'T)$ and put $I=I'J$.
 Note that $\wkclass{I'}=\wkclass{I}$ in $\bWk{S}(R)$ and that $IT=T$,
 which implies that $I\subseteq T$.
 
 On the other hand, as $\frf T= \frf$ we get that
 \[\frf I = \frf T I = \frf T = \frf, \]
 and, since $\frf \subseteq (I:I)$, we obtain that $\frf=\frf I \subseteq I$, and we can conclude that $\frf \subseteq I \subseteq T$.
\end{proof}
The previous proposition tells us that in order to compute the representatives of  $\bWk{S}(R)$ we can look at the sub-$S$-modules of the finite quotient $T/\frf$.
One possible choice is to take $T=\cO_K$ and $\frf=(S:\cO_K)$, but to gain in efficiency we want to keep the quotient as small as possible.
The natural choice is to take as $T$ the smallest over-order of $S$ with $S^tT$ invertible (as a fractional $T$-ideal) and as $\frf$, the colon ideal $(S:T)$, which is the biggest fractional $T$-ideal in $S$.

\begin{remark}
    Given orders $S\subseteq T$, let $\frf=(S:T)$.
    If $S$ is Gorenstein then by Lemma \ref{lemma:quotidealprod} and Proposition \ref{prop:eqGorenstein} it follows that 
    \[ (\frf:\frf)= (S:T(S:T))=(S:(S:T))=T. \]
    If $S$ is not Gorenstein then the multiplicator ring of $\frf$ might still be equal to $T$. This for example must be the case when $T=\cO_K$. The multiplicator ring of $\frf$ can also be strictly bigger than $T$, as Example \ref{ex:multringconductor} shows. 
    If we assume that $S^tT$ is invertible in $T$, as required in Proposition \ref{prop:wkbar}, then $(\frf:\frf)=T$, because $\frf=(S^tT)^t$ and a fractional ideal has the same multiplicator ring as its trace dual.
\end{remark}

\begin{example}
\label{ex:multringconductor}
    Let $f=x^6 - 4x^5 + 11x^4 - 24x^3 + 55x^2 - 100x + 125$.
    Observe that $f=f_1f_2f_3$ where $f_1=x^2 - 4x + 5$, $f_2=x^2 - 2x + 5$ and $f_3=x^2 + 2x + 5$.
    Put $K=\Q[x]/(f)$ and $K_i=\Q[x]/(f_i)$ for $i=1,2,3$.
    We identify $K=K_1\times K_2\times K_3$ and, for $i=1,2,3$, we denote by $e_i$ and $\alpha_i$ the elements $1\bmod f_i$ and $x\bmod f_i$, respectively.
    Consider the orders
    \[ S=e_1\Z\oplus \alpha_1\Z \oplus 2e_2\Z  \oplus \left(\frac 12 e_2+\frac 12 \alpha_2\right)\Z \oplus (e_2+e_3)\Z \oplus (e_2+\alpha_3)\Z \]
    and
    \[ T=e_1\Z\oplus \alpha_1\Z \oplus e_2 \Z\oplus \left(\frac 12 e_2+\frac 12 \alpha_2\right)\Z \oplus e_3\Z\oplus \alpha_3\Z.\]
    Then $S\subseteq T$ with index $2$ and the multiplicator ring of $\frf=(S:T)$ is the maximal order 
    \[ \cO_K=e_1\Z\oplus \alpha_1\Z \oplus e_2 \Z\oplus \left(\frac 12 e_2+\frac 12 \alpha_2\right)\Z \oplus e_3\Z\oplus \left(\frac 12 e_3+\frac 12 \alpha_3\right)\Z \]
    and it is easy to check that $[\cO_K:T]=2$.
\end{example}

\begin{remark}
    Let $I'$ be a fractional $R$-ideal. As in the proof of Proposition \ref{prop:wkbar}, let $J$ be a representative of the pre-image under the extension map $\Pic(R)\twoheadrightarrow \Pic(\cO_K)$ of $\isoclass{ (\cO_K:I'\cO_K) }$ and put $I=I'J$. Then $\wkclass{I}=\wkclass{I'}$ and $I\cO_K=\cO_K$ which implies
    \[ \frf\subseteq I \subseteq \cO_K, \]
    where $\frf$ is the conductor of $R$, that is, $\frf=(R:\cO_K)$.
    So if the quotient $\cO_K/\frf$ is not too big we can look directly at its sub-$R$-modules in order to get all representatives of the classes of $\Wk(R)$.
    One can also obtain all the over-orders of $R$ by computing the multiplicator rings of the representatives of $\Wk(R)$.
\end{remark}

Let $T$ be an over-order of $S$ such that $S^tT$ is an invertible fractional $T$-ideal.
Choose primes $\p_1,\ldots,\p_r$ of $S$ and positive integers $e_1,\ldots, e_r$ such that $T \subseteq (\frf:\frf)$, where
\[\frf=\p_1^{e_1}\cdots \p_r^{e_r}.\]
Note that such $\frf$ satisfies the hypothesis of Proposition \ref{prop:wkbar} and, moreover, $\frf\subset (S:T)$, since $(S:T)$ contains all fractional $T$-ideals contained in $S$.
It follows that the primes $\p_i$ must be non-invertible.

By the Chinese Remainder Theorem there is a ring isomorphism
\[ \dfrac{S}{\frf} \simeq \dfrac{S}{\p_1^{e_1}}\times \ldots \times \dfrac{S}{\p_r^{e_r}}, \]
which, after taking the tensor product with $T$, becomes
\begin{equation}
  \dfrac{T}{\frf} \simeq \dfrac{T}{\p_1^{e_1}T}\times \ldots \times \dfrac{T}{\p_r^{e_r}T}. \label{eq:localwkclasses}
\end{equation}
Observe that the isomorphism \eqref{eq:localwkclasses} is compatible with ideal multiplication and hence it respects weak equivalences.
In particular, we can compute $\bWk{S_{\p_i}}(S_{\p_i})$ by looking at the sub-$S$-modules of the ``local'' quotient $T/\p_i^{e_i}T$ up to weak equivalence.
Then we can ``patch'' them together via the isomorphism \eqref{eq:localwkclasses} and hence reconstruct all the representatives of $\bWk{S}(R)$.
If $r>1$ this tells us that we can split the computation of $\bWk{S}(R)$ and hence potentially obtain a more efficient algorithm.
The next two remarks will tell us that we can further improve the algorithm by ignoring or reducing some factors in \eqref{eq:localwkclasses} if the corresponding primes $\p_i$ satisfy certain conditions.

\begin{remark}
      Let $\p_i$ be one the primes appearing in \eqref{eq:localwkclasses}.
      If the $S/\p_i$-vector space $S^t/\p_iS^t$ is one-dimensional, then by Nakayama's lemma we have that $S^t$ is locally principal at $\p_i$.
      It follows that each fractional ideal $I$ with multiplicator ring $S$, that is, $II^t=S^t$, will be locally invertible at $\p_i$, or, in other words, $\bWk{S_{\p_i}}(S_{\p_i})$ is trivial.
\end{remark}

\begin{remark}
      Let $\p$ be one of the primes appearing in the decomposition \eqref{eq:localwkclasses}.
      Observe that $T_{\p}$ has only finitely many primes $\frP_1,\ldots,\frP_m$, which are exactly the ones lying above $\p$.
      Assume that $m<q$, where $q=\#(S/\p)$. Then by \cite[Lemma 4]{delcdvor00} for each ideal $I$ of $S_\p$ such that $IT_\p$ is invertible there exists $x\in I$ such that $IT_\p=xT_\p$.
      This implies that 
      \[ S_\p \subseteq \frac{1}{x}I\subseteq \frac{1}{x}IT_\p=T_\p. \]
      This means that, if we also assume that $S^tT$ is invertible in $T$, we can find all the classes of $\bWk{S_\p}(S_\p)$ in the quotient $T_\p/S_\p$ and this quotient might be smaller than $T/\p_i^{e_i}T$.
\end{remark}


\section{ Algorithms }
\label{sec:algorithms}

In this section we present the pseudo-code for the algorithms described in the previous sections.
The implementation in Magma \cite{Magma} is available at \url{https://github.com/stmar89/AbVarFq}.
We will use without mentioning well known algorithms for abelian groups, which can all be found in \cite[Section 2.4]{cohen93}.\\

\begin{algorithm}[H]

 \KwIn{An order $R$ in a $\Q$-\'etale algebra $K$;}
 \KwOut{A list $\cL^{o}$ containing the over-orders of $R$;}
 Compute the maximal order $\cO_K$ of $K$\;
 Compute the quotient as abelian groups $q:\cO_K\twoheadrightarrow Q:=\cO_K/R$\;
 Initialize an empty list $\cL^{o}$\;
 
 \For{$\text{each }H' \unlhd Q $}{
    Put $S:=\Span{q^{-1}(H')}_R$\;
    \If{ $SS=S$ and $S \not \in \cL^{o}$ }{Append $S$ to $\cL^{o}$\;}
 } 
 \KwRet{$\cL^{o}$}\;
 \caption{\label{alg:over-orders} Computing over-orders of a given order}
\end{algorithm}
\begin{thm}
 Algorithm \ref{alg:over-orders} is correct.
\end{thm}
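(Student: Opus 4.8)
The plan is to verify that Algorithm \ref{alg:over-orders} correctly enumerates the over-orders of $R$, using the finiteness of $\cO_K/R$ and Lemma \ref{lemma:idempfracid}. First I would recall that every over-order $S$ of $R$ satisfies $R\subseteq S\subseteq \cO_K$: the inclusion $R\subseteq S$ holds because $S$ is an $R$-module containing $1$, and $S\subseteq \cO_K$ holds because $S$, being a ring finitely generated as a $\Z$-module, consists of elements integral over $\Z$. Consequently $S$ corresponds, under the quotient map $q\colon \cO_K\twoheadrightarrow Q:=\cO_K/R$, to the subgroup $H':=q(S)\unlhd Q$, and since $R\subseteq S$ we recover $S=q^{-1}(H')$. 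Because $Q$ is a finite abelian group it has only finitely many subgroups, so the \textbf{for} loop in the algorithm runs over a finite set that includes $q(S)$ for every over-order $S$; this shows the loop is well-defined and terminates.

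Next I would check that each candidate produced in the loop is handled correctly. For a subgroup $H'\unlhd Q$, the algorithm forms $S:=\langle q^{-1}(H')\rangle_R$, the $R$-submodule of $\cO_K$ generated by $q^{-1}(H')$; note $q^{-1}(H')$ is already an $R$-submodule (it contains $R=q^{-1}(0)$ and is closed under the $R$-action since $q$ is a map of $R$-modules), so in fact $S=q^{-1}(H')$, and this is a fractional $R$-ideal because it has full rank, sitting between $R$ and $\cO_K$. The algorithm then appends $S$ to the output list precisely when $SS=S$. By Lemma \ref{lemma:idempfracid}, the condition $SS=S$ holds if and only if $S$ is an over-order of $R$. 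Hence the list $\cL^o$ consists exactly of those $q^{-1}(H')$, as $H'$ ranges over subgroups of $Q$, that are over-orders, and the duplicate check $S\notin\cL^o$ ensures each appears once.

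Finally I would combine the two directions: every over-order $S$ arises as $q^{-1}(q(S))$ for some subgroup $q(S)\unlhd Q$ and passes the idempotency test, so it is appended; conversely every element appended is an idempotent fractional ideal, hence an over-order by Lemma \ref{lemma:idempfracid}. Therefore $\cL^o$ is exactly the (finite) set of over-orders of $R$, and the algorithm is correct. I do not anticipate a genuine obstacle here; the only point requiring a little care is the observation that $q^{-1}(H')$ is automatically an $R$-module, so that taking the $R$-span does not enlarge it and the correspondence $S\leftrightarrow q(S)$ between over-orders and subgroups of $Q$ is exact — everything else is a direct application of the finiteness of $\cO_K/R$ and Lemma \ref{lemma:idempfracid}.
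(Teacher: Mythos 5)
Your proof is correct and is essentially an expanded version of the paper's own (one-line) argument: the paper simply cites Lemma \ref{lemma:idempfracid}, i.e.\ that the over-orders are exactly the idempotent fractional $R$-ideals between $R$ and $\cO_K$, and your completeness/soundness directions spell out why enumerating subgroups of $\cO_K/R$ and testing $SS=S$ realizes this. One parenthetical is inaccurate, though harmless: the loop runs over subgroups $H'$ of the \emph{abelian group} $Q$, not over $R$-submodules, so $q^{-1}(H')$ need not be an $R$-module and taking the $R$-span can genuinely enlarge it; your argument survives because completeness only uses $H'=q(S)$ (which is an $R$-submodule, so the span is $S$ itself), while soundness applies Lemma \ref{lemma:idempfracid} to the span directly and the duplicate check absorbs any collisions.
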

\begin{proof}
 This follows from the fact that the over-orders of $R$ are precisely the idempotent fractional $R$-ideals contained in $\cO_K$ and containing $R$, as shown in Lemma \ref{lemma:idempfracid}.
\end{proof}

A new and much more efficient version of Algorithm \ref{alg:over-orders} is described in \cite{HofmannSircana19}.
\\

\begin{algorithm}[H]
 \KwIn{Two fractional $R$-ideals $I$ and $J$;}
 \KwOut{Whether $I$ and $J$ are weakly equivalent;}
 \eIf{$1\in (I:J)(J:I)$}
     {\KwRet{true}\;}
     {\KwRet{false}\;}
 \caption{\label{alg:wkeq_test} Returns whether two fractional $R$-ideals $I$ and $J$ are weakly equivalent}
\end{algorithm}

\begin{thm}
 Algorithm \ref{alg:wkeq_test} is correct.
\end{thm}
\begin{proof}
    It follows by Proposition \ref{prop:weakeq}.
\end{proof}

\begin{algorithm}[H]
 \KwIn{An order $S$ in a $\Q$-\'etale algebra $K$;}
 \KwOut{A list $\cL^{w}$ of the representatives of the weak equivalence classes of ideals with endomorphism ring $S$, that is $\bWk{S}(R)$;}
 Compute the trace dual ideal $S^t$\;
 Initialize an empty list $\cL^{w}$\;
 \eIf{ $1 \in S^t(S:S^t)$ }
 {Append $S$ to $\cL^{w}$\;}
 {
      Find an over-order $T$ of $S$ such that $1 \in S^tT(T:S^tT)$  \tcp*[l]{use Algorithm \ref{alg:over-orders}}
      Put $\frf:=(S:T)$\;
      Consider the quotient $q:=T \twoheadrightarrow Q:=T/\frf$\;
      \For{$\text{each }H' \unlhd Q $}
      {
	  Put $I:=\Span{q^{-1}(H')}_S$\;
	  \If(\tcp*[h]{use Algorithm \ref{alg:wkeq_test}}){there is no $J \in \cL^{w}$ weakly eq.~to $I$}
	  {Append $I$ to $\cL^{w}$\;}
      } 
 } 
 \KwRet{$\cL^{w}$}\;
 \caption{\label{alg:wkbar} Computing representatives of the classes in $\bWk{S}(R)$ for an order $S$}
\end{algorithm}
\begin{thm}
 Algorithm \ref{alg:wkbar} is correct.
\end{thm}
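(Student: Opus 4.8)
The plan is to establish two things: that Algorithm~\ref{alg:wkbar} terminates, and that on input $S$ its output $\cL^w$ is a list of fractional ideals, each with multiplicator ring $S$, containing exactly one representative of every class of $\bWk(S)$. I would treat the two branches of the outer conditional separately. For the first branch, note that $S^t(S:S^t)$ is a fractional $S$-ideal contained in $S$, so the test $1\in S^t(S:S^t)$ holds exactly when $S^t(S:S^t)=S$, i.e.\ when $S^t$ is invertible in $S$; by Proposition~\ref{prop:eqGorenstein} this is precisely the condition that $S$ be Gorenstein. In that case, as observed just after Proposition~\ref{prop:weakeq}, $\bWk(S)=\set{\wkclass{S}}$ — every fractional ideal with multiplicator ring $S$ is invertible in $S$, hence weakly equivalent to $S$ — so returning $\set{S}$ is correct.

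For the \textbf{else} branch I would first justify the search for $T$: since every $\cO_K$-ideal is invertible, $T=\cO_K$ satisfies $1\in S^t\cO_K(\cO_K:S^t\cO_K)$, so a valid $T$ exists and is located by scanning the output of Algorithm~\ref{alg:over-orders}. Put $\frf=(S:T)$. Then $\frf\subseteq S$ (because $1\in T$), and $\frf$, being the trace dual $(S^tT)^t$ of an ideal invertible in $T$, has multiplicator ring $T$ by Lemma~\ref{lemma:invidealmultring}; in particular $T\subseteq(\frf:\frf)$ (cf.\ the remark preceding Example~\ref{ex:multringconductor}). Thus the pair $(T,\frf)$ satisfies the hypotheses of Proposition~\ref{prop:wkbar}, so every class of $\bWk(S)$ has a representative $I_0$ with $\frf\subseteq I_0\subseteq T$. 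Such an $I_0$ is an $S$-submodule of $T$ containing $\frf=\ker q$, whence $I_0=\Span{q^{-1}(I_0/\frf)}_S$; therefore $I_0$ occurs among the ideals $I=\Span{q^{-1}(H')}_S$ that the loop forms as $H'$ runs over the finitely many subgroups of the finite group $Q=T/\frf$. Each such $I$ is itself a fractional $S$-ideal, being a finitely generated $S$-module with $\frf\subseteq I\subseteq T$ and $\frf$ containing a non-zero divisor.

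I would then verify that the inner \textbf{if} keeps exactly one representative of each class of $\bWk(S)$. The ideal $I$ is appended precisely when $(I:I)=S$ and $I$ is weakly inequivalent to every ideal currently in $\cL^w$; for the latter I use Proposition~\ref{prop:weakeq}.\ref{prop:weakeq:2}, by which the predicate $1\in(I:J)(J:I)$ is exactly weak equivalence of $I$ and $J$. So at every stage $\cL^w$ consists of fractional ideals with multiplicator ring $S$ that are pairwise not weakly equivalent, hence represent distinct classes of $\bWk(S)$. Conversely, given $c\in\bWk(S)$ with representative $I_0$ as above, when the loop reaches $H'=I_0/\frf$ it forms $I=I_0$, and since $(I_0:I_0)=S$ either $I_0$ is appended or some ideal already present is weakly equivalent to $I_0$; in either case a representative of $c$ lies in the final $\cL^w$, as nothing is ever removed. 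Termination is immediate: $Q$ is finite, the loop runs over its finitely many subgroups, and each inner step — forming $\Span{q^{-1}(H')}_S$ and evaluating the colon ideals and products in the test — is effective.

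I expect the main obstacle to be the setup of the \textbf{else} branch, namely checking that the $T$ returned by the search together with $\frf=(S:T)$ really satisfies the hypothesis $T\subseteq(\frf:\frf)$ of Proposition~\ref{prop:wkbar}; this is the point where one must invoke the trace-dual identity $(S:T)=(S^tT)^t$ and the invertibility of $S^tT$ in $T$ to conclude $(\frf:\frf)=T$. A related subtlety is that the weak-equivalence test alone does not keep $\cL^w$ free of spurious entries: submodules $I$ of $T/\frf$ whose multiplicator ring is strictly larger than $S$ — over-orders of $S$, or $T$ itself — are weakly inequivalent to everything with multiplicator ring $S$, so they must be discarded by the separate test $(I:I)=S$, and the irredundancy part of the argument has to use this.
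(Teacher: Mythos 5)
Your proof is correct and is essentially a full expansion of the paper's one-line argument, which likewise rests on Propositions~\ref{prop:weakeq} and~\ref{prop:wkbar} (together with Proposition~\ref{prop:eqGorenstein} for the Gorenstein branch and the identity $(S:T)=(S^tT)^t$ to verify the hypothesis $T\subseteq(\frf:\frf)$). Your closing observation is also well taken: as printed, the inner conditional of Algorithm~\ref{alg:wkbar} tests only weak equivalence against the current list and omits the test $(I:I)=S$, without which submodules such as $I=T$ or $I=\frf$ (whose multiplicator ring is $T$) would be retained, so the correctness claim does require that check to be read into (or added to) the pseudo-code exactly as you argue.
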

\begin{proof}
 The correctness of the algorithm follows from Propositions \ref{prop:weakeq} and \ref{prop:wkbar}.
\end{proof}

\begin{algorithm}[H]
 \KwIn{An order $R$ in a $\Q$-\'etale algebra $K$;}
 \KwOut{A list $\cL^{iso}$ of the representatives of the isomorphism classes of ideals, that is $\ICM(R)$;}
 Compute the over-orders $\cL^{o}$ of $R$  \tcp*[l]{use Algorithm \ref{alg:over-orders}}
 Initialize the empty list $\cL^{iso}$\;
 \For{each $S$ in $\cL^{o}$}
 {
 Compute a list $\cL^{w}_S$ of representatives of $\bWk{S}(R)$ \tcp*[l]{use Algorithm \ref{alg:wkbar}}
 Compute a list $\cL^{i}_S$ of representatives of $\Pic(S)$\;
 \For{each $I$ in $\cL^{w}_S$ and each $J$ in $\cL^{i}_S$}
 {Append $IJ$ to $\cL^{iso}$\;}
 }
 \KwRet{$\cL^{iso}$}\;
 \caption{\label{alg:icm} Computing representatives of the classes in $\ICM(R)$ for an order $R$}
\end{algorithm}
\begin{thm}
 Algorithm \ref{alg:icm} is correct.
\end{thm}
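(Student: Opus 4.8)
The plan is to reduce the correctness of Algorithm~\ref{alg:icm} entirely to Theorem~\ref{thm:icmcomputation}, together with the correctness of Algorithms~\ref{alg:over-orders} and~\ref{alg:wkbar} and the effectivity of the Picard group computation discussed in Remark~\ref{rmk:computePic}. The starting point is the partition $\ICM(R)=\bigsqcup_S \bICM(S)$, the disjoint union being taken over the over-orders $S$ of $R$; this is well defined precisely because the multiplicator ring of a fractional ideal is an isomorphism invariant. Hence it suffices to show that the outer loop of Algorithm~\ref{alg:icm} visits each over-order of $R$ exactly once, and that, for each such $S$, the ideals $IJ$ produced in the inner double loop form a complete and irredundant set of representatives of $\bICM(S)$.

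First I would invoke the correctness of Algorithm~\ref{alg:over-orders} to conclude that $\cL^{o}$ is exactly the (finite) set of over-orders of $R$, so the outer loop is as required. Then, fixing $S\in\cL^{o}$, the correctness of Algorithm~\ref{alg:wkbar} gives that $\cL^{w}$ consists of pairwise non-weakly-equivalent fractional ideals $I_1,\dots,I_r$ with $(I_i:I_i)=S$ whose weak equivalence classes exhaust $\bWk(S)$; and Remark~\ref{rmk:computePic} gives that one can effectively produce a list $\cL^{i}$ of pairwise non-isomorphic fractional $S$-ideals $J_1,\dots,J_s$, each invertible in $S$, whose classes exhaust $\Pic(S)$. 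Feeding $\cL^{w}$ and $\cL^{i}$ into Theorem~\ref{thm:icmcomputation} yields $\bICM(S)=\set{\isoclass{I_iJ_j}:1\le i\le r,\ 1\le j\le s}$ with the $I_iJ_j$ pairwise non-isomorphic, which is exactly what the inner loop records. Taking the union over $S\in\cL^{o}$ and using the disjointness of the $\bICM(S)$, one obtains that $\cL^{iso}$ is an irredundant set of representatives of $\ICM(R)$. Termination is immediate: $\cL^{o}$ is finite, each call to Algorithm~\ref{alg:wkbar} terminates returning a finite list (its loop ranges over the subgroups of a finite abelian group), and each $\Pic(S)$ is finite.

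I do not expect a genuine obstacle, since the statement is really an assembly of results established earlier. The only ingredient that is not a verbatim citation is the effectivity of $\Pic(S)$ for a possibly non-maximal over-order $S$ of $R$: this relies on the exact sequence~\eqref{eq:Picexactseq} (classical for $r=1$, and due to \cite{JP16} for $r>1$), on the computation of $\Pic(\cO_K)=\prod_i\Pic(\cO_{K_i})$ via \cite{psh08}, and on the fact that the algorithms of \cite{klupau05} for the middle term of~\eqref{eq:Picexactseq} extend word for word to the product-of-number-fields setting, as recorded in Remark~\ref{rmk:computePic}. Everything else follows by quoting Theorem~\ref{thm:icmcomputation} and the correctness of Algorithms~\ref{alg:over-orders} and~\ref{alg:wkbar}.
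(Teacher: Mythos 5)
Your proposal is correct and follows exactly the route the paper takes: the paper's own proof is a one-line reduction to Theorem~\ref{thm:icmcomputation}, and your argument simply spells out the same reduction in more detail, adding the (implicitly assumed) correctness of Algorithms~\ref{alg:over-orders} and~\ref{alg:wkbar} and the effectivity of $\Pic(S)$ from Remark~\ref{rmk:computePic}. No gaps.
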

\begin{proof}
 This follows from Theorem \ref{thm:icmcomputation}.
\end{proof}

\begin{algorithm}[H]
 \KwIn{Two fractional $R$-ideals $I$ and $J$;}
 \KwOut{Whether $I$ and $J$ are isomorphic and if so it returns an element $\alpha\in K$ such that $\alpha I=J$; }
 \If(\tcp*[h]{use Algorithm \ref{alg:wkeq_test}}){$I$ and $J$ are weakly equivalent}
     {Compute $C=(J:I)$\;
      Compute $S=(I:I)$\;
      \If{ $C$ is a principal $S$-ideal}
      	  {Compute $\alpha$ such that $\alpha S=C$\;
       	  \KwRet{(true, $\alpha$)}\;
      	  }
     }
 \KwRet{false}\;
 \caption{\label{alg:isom_test} Returns whether two fractional $R$-ideals $I$ and $J$ are isomorphic and if so it returns an element $\alpha\in K$ such that $\alpha I=J$}
\end{algorithm}

\begin{thm}
 Algorithm \ref{alg:isom_test} is correct.
\end{thm}
\begin{proof}
 This follows from Corollary \ref{cor:isom_test}.
\end{proof}

\begin{algorithm}[H]
 \KwIn{a list $\cL^{iso}$ of representatives of $\ICM(R)$ and a fractional $R$-ideal $I$;}
 \KwOut{a pair $(I_0,\alpha)$, consisting of a fractional $R$-ideal $I_0$ from the list $\cL^{iso}$ and an element $\alpha\in K^\times$ such that $\alpha I=I_0$;}
 Put $\cL_{I}=\set{ J \in \cL^{iso} : J \text{ is weakly eq.~to } I }$ \tcp*[l]{use Algorithm \ref{alg:wkeq_test}}
 Scan $\cL_{I}$ for $I_0$ such that $\alpha I=I_0$, for some $\alpha \in K^\times$ \tcp*[l]{use Algorithm \ref{alg:isom_test}}
 \KwRet{$(I_0,\alpha)$}\;
 \caption{\label{alg:icm_dlp} Returns the representative of the isomorphism class of a given fractional $R$-ideal }
\end{algorithm}

\begin{thm}
 Algorithm \ref{alg:icm_dlp} is correct.
\end{thm}
\begin{proof}
 The algorithm terminates because $\cL_{I}$ contains by construction a fractional $R$-ideal isomorphic to $I$.
\end{proof}

\begin{remark}
\label{rmk:icm_dlp_var}
If the list $\cL^{iso}$ is computed using Algorithm \ref{alg:icm} and in the process one stores the lists $\cL_S^w$ and $\cL_S^i$ then Algorithm \ref{alg:icm_dlp} could be modified as follows.
Firstly, one computes the multiplicator ring $S$ of $I$, then one finds a representative $I_w$ of the weak equivalence of $I$ in $\cL_S^w$ using Algorithm \ref{alg:wkeq_test} and then one finds a representative $I_i$ of the class of $(I:I_w)$ in $\cL_S^i$ either by solving the corresponding DLP in $\Pic(S)$ or by using Algorithm \ref{alg:isom_test}.
Finally the representative $I_0$ of the class of $I$ in $\cL^{iso}$ is given by $I_wI_i$.
\end{remark}

\begin{remark}
\label{rmk:mult_table}
Given $I$ and $J$ in $\cL^{iso}$ one can use Algorithm \ref{alg:icm_dlp} (or the variation in Remark \ref{rmk:icm_dlp_var}) to compute the representative in $\cL^{iso}$ of the product $IJ$.
 This means that we can build a multiplication table of the commutative monoid $\ICM(R)$.
\end{remark}

\begin{remark}
Algorithm \ref{alg:over-orders} requires the enumeration of the subgroups of the finite quotient $\cO_K/R$ which is a very costly computation.
Since the number of subgroups that give rise to over-orders of $R$ is much smaller than the total number, there is a big room for improvement.
A new and efficient solution is provided in \cite{HofmannSircana19}.
Similar considerations are valid also for the computations of the weak equivalence classes in Algorithm \ref{alg:wkbar}.
In particular these two algorithms constitute the major bottlenecks of the computation of $\ICM(R)$ in Algorithm \ref{alg:icm}.
Algorithm \ref{alg:wkeq_test} is efficient because it involves only the computations of ideal quotients, product of fractional ideals and testing inclusion.
The most expensive operation in Algorithms \ref{alg:isom_test} and \ref{alg:icm_dlp} is testing whether a fractional ideal is principal and producing a generator.
\end{remark}

\section{Examples}
\label{sec:examples}

The examples contained in this section were computed with Magma \cite{Magma}.
The implementation of the algorithms from Section \ref{sec:algorithms} can be found at \url{https://github.com/stmar89/AbVarFq}.

\begin{example}
\label{ex1}
 Let $f=x^3+31x^2+43x+77$ and let $\alpha$ be a root of $f$. Consider the monogenic order defined by $f$, say $E=S_1=\Z[\alpha]$. There are $15$ over-orders of $E$. The maximal order is $\cO=S_{15}=\Z\oplus\frac{\alpha+5}{8}\Z\oplus \frac{\alpha^2 + 2\alpha + 49}{64}\Z$. Observe that $[\cO:E]=512$, so the only singular prime is $2$.
 In Figure \ref{ex1:fig} and Table \ref{ex1:table} we describe the over-orders with the weak equivalence classes and Picard groups.

  \begin{figure}[h]
  	\scalebox{1}[1]{
  	\xymatrix@C=10pt{
						&						&											&											& (S_9,2) \ar[rd]^{2}	&												& (S_{13},1) \ar[rd]^{2}	&& \\
						&						&											& (S_5,2) \ar[r]^{2}\ar[ru]^{2}\ar[rd]^{2}	& (S_8,2) \ar[r]^{2}		& (S_{10},2) \ar[r]^{2}\ar[ru]^{2}\ar[rd]_{2} 	& (S_{12},1) \ar[r]^{2}	& (S_{14},2) \ar[r]^{2^2}	& (S_{15},1)\\ 
	(S_1,1) \ar[r]^{2} 	& (S_2,2) \ar[r]^{2} 	& (S_3,2) \ar[r]^{2}\ar[ru]^{2}\ar[rd]_{2} 	& (S_6,1) \ar[r]^{2}							& (S_7,2) \ar[ru]_{2}	&												& (S_{11},1) \ar[ru]_{2}	&& \\
						&						&											& (S_4,1) \ar[ru]_{2}						&&&&& \\ 	
  	}
  	}
    \caption{This is the lattice of inclusions of the over-orders of $S_1$  from Example \ref{ex1}.
    Each vertex is labeled as $(S_i,\#\bWk{S_i}(S_1))$. The edges are marked by the index of the corresponding inclusion.
    \label{ex1:fig}}
  \end{figure}

  \begin{table}[h]
    	\scalebox{.9}[.9]{
    \begin{tabular}{cccc}
	\hline
	i & $\Z$-basis of $S_i$ 						& $[\cO:S_i]$ & $\Pic(S_i)$ \\ \hline
	1 & $1,\alpha,\alpha^2$ 						& $512$ & $\nicefrac{\Z}{4\Z}$  \\ \hline
	2 & $1 ,\alpha,\frac{\alpha^2 + 1}{2} $ 				& $256$ & $\nicefrac{\Z}{4\Z}$\\ \hline
	3 & $1 ,\alpha,\frac{\alpha^2 + 2\alpha + 1}{4} $ 			& $128$ & $\nicefrac{\Z}{4\Z}$ \\ \hline
	4 & $1 ,\alpha,\frac{\alpha^2 + 6\alpha + 5}{8} $ 			& $64$ 	& $\nicefrac{\Z}{2\Z}$ \\ \hline
	5 & $1 ,\alpha,\frac{\alpha^2 + 2\alpha + 1}{8} $			& $64$  & $\nicefrac{\Z}{4\Z}$ \\ \hline
	6 & $1 ,\frac{\alpha + 1}{2},\frac{\alpha^2 + 3}{4}$  		& $64$  & $\nicefrac{\Z}{2\Z}$ \\ \hline
	7 & $1 ,\frac{\alpha + 1}{2},\frac{\alpha^2 + 2\alpha + 1}{8}$ 	& $32$  & $\nicefrac{\Z}{2\Z}$\\ \hline
	8 & $1 ,\alpha,\frac{\alpha^2 + 10\alpha + 9}{16}$ 			& $32$ 	& $\nicefrac{\Z}{2\Z}$\\ \hline
	9 & $1 ,\alpha,\frac{\alpha^2 + 2\alpha + 1}{16}$ & $32$ 	& $\nicefrac{\Z}{4\Z}$\\ \hline
	10 & $1 ,\frac{\alpha + 1}{2},\frac{\alpha^2 + 2\alpha + 1}{16}$ 	& $16$ 	& $\nicefrac{\Z}{2\Z}$ \\ \hline
	11 & $1 ,\frac{\alpha + 1}{2},\frac{\alpha^2 + 2\alpha + 17}{32}$	& $8$ 	& $1$ \\ \hline    
	12 & $1 ,\frac{\alpha + 1}{2},\frac{\alpha^2 + 10\alpha + 25}{32}$	& $8$ 	& $1$ \\ \hline    
	13 & $1 ,\frac{\alpha + 1}{4},\frac{\alpha^2 + 2\alpha + 1}{16}$	& $8$ 	& $\nicefrac{\Z}{2\Z}$ \\ \hline    
	14 & $1 ,\frac{\alpha + 1}{4},\frac{\alpha^2 + 2\alpha + 17}{32}$	& $4$ 	& $1$ \\ \hline    
	15 & $1 ,\frac{\alpha + 5}{8},\frac{\alpha^2 + 2\alpha + 49}{64}$	& $1$ 	& $1$ \\ \hline    
    \end{tabular}
    }     
    \caption{The over-orders of $S_1$ from Example \ref{ex1}}
    \label{ex1:table}
  \end{table}
 Among the over-orders of $E$, the orders $S_2,S_3,S_5,S_7,S_8,S_9,S_{10},S_{14}$ are non-Gorenstein and there are no other non-invertible weak equivalence classes apart from $S_{i}^t$.
 This implies that $\# \Wk(E)=23$ and, using the information about the Picard groups, we can deduce that $\#\ICM(E)=59$.
\end{example}

\begin{example}
\label{ex2}
 Let $f=x^3-1000x^2-1000x-1000$ and let $\alpha$ be a root of $f$. Consider the monogenic order defined by $f$, say $E=S_1=\Z[\alpha]$. There are $16$ over-orders of $E$. The maximal order is $\cO=S_{16}=\Z\oplus\frac{\alpha}{10}\Z\oplus \frac{\alpha^2}{100}\Z$. Observe that $[\cO:E]=1000$, so the singular primes are $2$ and $5$.
 In Figure \ref{ex2:fig} and Table \ref{ex2:table} we describe the over-orders with the weak equivalence classes and Picard groups. 
  \begin{figure}[h]
  	\scalebox{1}[1]{
  	\xymatrix{
  	(S_1,1) \ar[d]_{5}\ar[r]^{2} & (S_2,2) \ar[d]_{5}\ar[r]^{2} 		& (S_3,1) \ar[d]_{5}\ar[r]^{2} 		& (S_{5},1) \ar[d]_{5} \\
  	(S_4,2) \ar[d]_{5}\ar[r]^{2} & (S_6,4) \ar[d]_{5}\ar[r]^{2} 		& (S_{7},2) \ar[d]_{5}\ar[r]^{2}		& (S_{9},2) \ar[d]_{5} \\
  	(S_8,1) \ar[d]_{5}\ar[r]^{2} & (S_{10},2) \ar[d]_{5}\ar[r]^{2} 	& (S_{11},1) \ar[d]_{5}\ar[r]^{2}	& (S_{13},1) \ar[d]_{5} \\
  	(S_{12},1) \ar[r]^{2}		& (S_{14},2)  \ar[r]^{2} 			& (S_{15},1)  \ar[r]^{2}				& (S_{16},1)\\
  	}
  	}
    \caption{This is the lattice of inclusions of the over-orders of $S_1$  from Example \ref{ex2}.
    Each vertex is labeled as $(S_i,\#\bWk{S_i}(S_1))$. The edges are marked by the index of the corresponding inclusion.}
    \label{ex2:fig}
  \end{figure}
  \begin{table}[h!]
  	\scalebox{.9}[.9]{
    \begin{tabular}{cccc}
	\hline
	i & $\Z$-basis of $S_i$ 						& $[\cO:S_i]$ & $\Pic(S_i)$ \\ \hline		
	1 & $1,\alpha,\alpha^2$ 						& $1000$ & $\nicefrac{\Z}{2\Z}\times \nicefrac{\Z}{8880\Z}$ 	\\ \hline 
	2 & $1 ,\alpha,\frac{\alpha^2}{2} $ 				& $500$  & $\nicefrac{\Z}{8880\Z}$ 			\\ \hline
	3 & $1 ,\alpha,\frac{\alpha^2 + 2\alpha}{4}$ 			& $250$  & $\nicefrac{\Z}{8880\Z}$ 			\\ \hline
	4 & $1 ,\alpha,\frac{\alpha^2}{5}$ 					& $200$	 & $\nicefrac{\Z}{2\Z}\times\nicefrac{\Z}{1776\Z}$ 	\\ \hline
	5 & $1 ,\frac{\alpha}{2},\frac{\alpha^2}{4}$			& $125$  & $\nicefrac{\Z}{2960\Z}$ 			\\ \hline 
	6 & $1 ,\alpha,\frac{\alpha^2}{10}$  				& $100$  & $\nicefrac{\Z}{1776\Z}$ 			\\ \hline 
	7 & $1 ,\alpha,\frac{\alpha^2 + 10\alpha}{20}$			& $50$   & $\nicefrac{\Z}{1776\Z}$ 			\\ \hline 
	8 & $1 ,\alpha,\frac{\alpha^2 + 10\alpha}{25}$ 			& $40$ 	 & $\nicefrac{\Z}{2\Z}\times\nicefrac{\Z}{444\Z}$ 	\\ \hline 
	9  & $1 ,\frac{\alpha}{2},\frac{\alpha^2}{20}$ 			& $25$ 	& $\nicefrac{\Z}{592\Z}$\\ \hline
	10 & $1 ,\alpha,\frac{\alpha^2 + 10\alpha}{50}$ 			& $20$ 	& $\nicefrac{\Z}{444\Z}$ \\ \hline
	11 & $1 ,\alpha,\frac{\alpha^2 + 10\alpha}{100}$			& $10$ 	& $\nicefrac{\Z}{444\Z}$ \\ \hline    
	12 & $1 ,\frac{\alpha}{5},\frac{\alpha^2}{25}$			& $8$ 	& $\nicefrac{\Z}{2\Z}\times\nicefrac{\Z}{74\Z}$ \\ \hline    
	13 & $1 ,\frac{\alpha}{2},\frac{\alpha^2 + 10\alpha}{100}$		& $5$ 	& $\nicefrac{\Z}{148\Z}$ \\ \hline    
	14 & $1 ,\frac{\alpha}{5},\frac{\alpha^2}{50}$			& $4$ 	& $\nicefrac{\Z}{74\Z}$ \\ \hline    
	15 & $1 ,\frac{\alpha}{5},\frac{\alpha^2 + 10\alpha}{100}$		& $2$ 	& $\nicefrac{\Z}{74\Z}$ \\ \hline    
	16 & $1 ,\frac{\alpha}{10},\frac{\alpha^2}{100}$			& $1$ 	& $\nicefrac{\Z}{74\Z}$ \\ \hline    
    \end{tabular}
    }    
    \caption{The over-orders of $S_1$ from Example \ref{ex2}}
    \label{ex2:table}
  \end{table}
 
 Among the over-orders of $E$, the orders $S_2,S_4,S_6,S_7,S_9,S_{10},S_{14}$ are non-Gorenstein, so we also have the weak equivalence classes corresponding to $S_i^t$, for $i=2,4,6,7,9,10,14$.
 But unlike the previous example, there are two other weak equivalence classes, represented by the ideals $I=50\Z\oplus 10\alpha\Z \oplus 5\alpha^2\Z$ and $J=20\Z\oplus 10\alpha\Z\oplus 2\alpha^2\Z$, both with multiplicator ring $S_6$.
 This means that $\#\Wk(E)=25$ and using the information about the Picard groups of the over-orders we can deduce that $\#\ICM(E)=69116$.
\end{example}

\begin{example}
\label{ex:prodnotBass}
  Consider the irreducible polynomials $f_1=x^2 + 4x + 7$ and $f_2=x^3 - 9x^2 - 3x - 1$ and define $f=f_1f_2$.
  Put $K_1=\Q[x]/(f_1)$, $K_2=\Q[x]/(f_2)$ and $K=\Q[x]/(f)\simeq K_1 \times K_2$.
  For $i=1,2$ denote by $R_i$ the monogenic order $\Z[x]/(f_i)$ and by $\cO_{i}$ the maximal order of $K_i$.
  Similarly, let $R$ be the monogenic order $\Z[x]/(f)$ and $\cO$ the maximal order of $K$.
  It is easy to verify that $[\cO_{i}:R_i]=2$ for both $i=1$ and $i=2$. 
  Therefore, for both $i=1$ and $i=2$, the only over-order of $R_i$ is $\cO_{i}$ and hence $R_i$ is a Bass order.
  In particular, it follows that
  \[ \ICM(R_i) = \Pic(R_i) \sqcup \Pic(\cO_{i}). \]
  We can check that $\Pic(R_1)$ is trivial and hence $\cO_{1}$ is a principal ideal domain, since the extension map
  $I\mapsto I\cO_{1}$ induces a surjective group homomorphism from $\Pic(R_1)$ to $\Pic(\cO_{1})$, see Remark \ref{rmk:computePic}.
  We deduce that
  \[ \ICM(R_1) = \set{ \isoclass{R_1},\isoclass{\cO_{1}} }. \]
  On the other hand, $\Pic(R_2)$ and $\Pic(\cO_{2})$ are both isomorphic to $\Z/3\Z$ and generated respectively by $I=(69R_2+(28+\alpha_2+\alpha_2^2)R_2)$ and $J=I\cO_{2}$, where $\alpha_2 = x \bmod f_2$.
  It follows that
  \[ \ICM(R_2)=\set{ \isoclass{R_2},\isoclass{I},\isoclass{I^2},\isoclass{\cO_{2}},\isoclass{J},\isoclass{J^2}}. \]
  The situation for $R$ is much more complicated, as Figure \ref{ex3:fig} shows.
  \begin{figure}[h!]
  	\scalebox{.75}[.75]{
	\includegraphics[angle=90,origin=c,width=\textwidth]{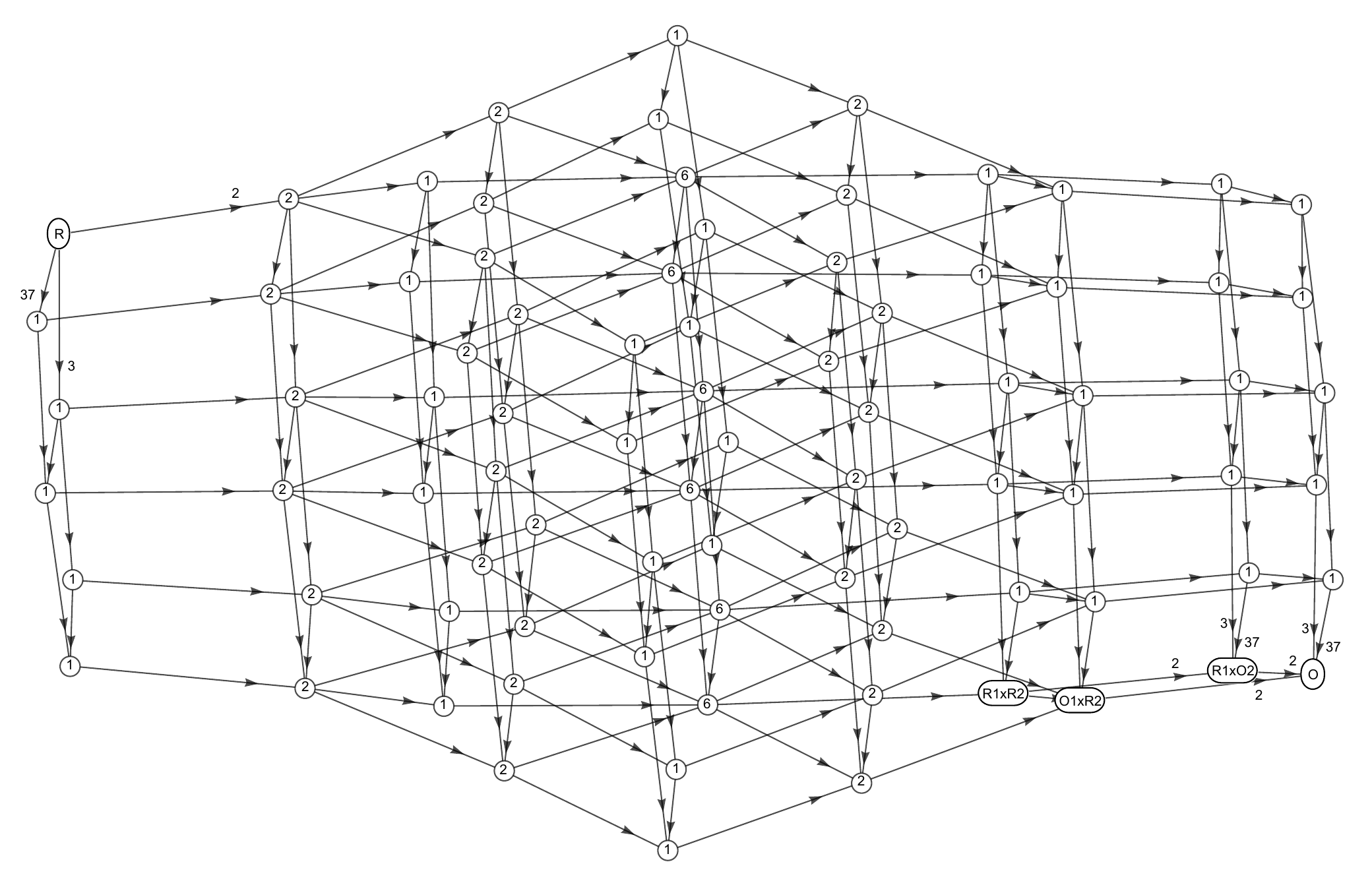}
	}
	\caption{The lattice of inclusions of the over-orders in Example \ref{ex:prodnotBass}. 
	The vertex corresponding to an order $S$ is labelled by the size of $\bWk{S}(R)$ except for the orders $R$, $R_1\times R_2$, $\cO_1\times R_2$, $R_1\times \cO_2$ and $\cO$ (which are all Gorenstein).
	The horizontal (resp.~vertical, diagonal) edges correspond to inclusions of index $2$ (resp~$3$, $37)$.
	We have labeled some of them at top-left and bottom-right corners for clarity.  }
    \label{ex3:fig}
  \end{figure}
  
  First of all, $R$ is not a product of orders of $K_1$ and $K_2$ and it can be computed that the index of $R$ in the maximal order $\cO\simeq\cO_{1} \times \cO_{2}$ is $21312=2^6 3^2 37$.
  We computed that $R$ has $84$ over-orders of which only $48$ are Gorenstein and hence $R$ is not Bass.
  We ran our algorithm for the ideal class monoid and we found that $\#\ICM(R)=852$.  
\end{example}


\section{Conjugacy classes of integral matrices}
\label{sec:matrixconj}


Recall that two $N\times N$ matrices $A$ and $B$ with entries in $\Z$ are \emph{conjugate} if there exists $O\in \GL_N(\Z)$ such that $OAO^{-1}=B$.
If this is the case, then $A$ and $B$ have the same minimal polynomial $m$ and the same characteristic polynomial $c$.
The converse is not true in general.
We will write $[A]_{\sim_\Z}$ for the conjugacy class of the matrix $A$.
In what follows we will describe how to compute representatives of the conjugacy classes when the minimal polynomial is square-free.
Our result is a generalization of \cite{LaClMD33}, where the authors treat the case $m=c$, which was then re-proved with a different method in \cite{taussky49}, with the extra assumption that $m=c$ is irreducible.
Note that Theorem \ref{thm:matrixconj} has independently been proved in \cite[Theorem 1.4]{HusertPhDThesis17} in greater generality.

Let $f_1,\ldots,f_r$ be a collection of distinct irreducible monic polynomials with integer coefficients and let $e_1,\ldots,e_r$ be positive integers such that $m=\prod f_i$, $c=\prod f_i^{e_i}$.
Put $N=\deg(c)$ and denote by $\cM_{m,c}(\Z)$ the set of integral $N\times N$ matrices with minimal and characteristic polynomials $m$ and $c$, respectively.

For every $i=1,\ldots,r$ put $K_i=\Q[x]/(f_i)$ and let $\Delta_i$ be the diagonal embedding of $K_i$ into $K_i^{e_i}$.
Define $\Delta$ as the product map $\prod_i\Delta_i$ with codomain $K=\prod_i K_i^{e_i}$. 
Observe that the order $R_0=\Z[x]/(m)$ has total quotient ring the $\Q$-algebra $\prod_i K_i$.
Denote with $R$ the image of $R_0$ in $K$ via $\Delta$ and put $\alpha=\Delta(x \mod (m))$. Let $\cL(R,K)$ be the set of full lattices in $K$ which are $R$-modules and pick $I$ in $\cL(R,K)$.
Fix a $\Z$-basis $\bar w=\set{w_1,\ldots ,w_n}$ of $I$.
The $R$-linear endomorphism of $I$ given by multiplication by $\alpha$ can be represented with respect to $\bar w$ by an integral matrix $A=A(I,\bar w)$ which lies in $\cM_{m,c}(\Z)$.
Clearly this representation depends on the choice of the $\Z$-basis of $I$.
If we change the $\Z$-basis of $I$ by a matrix $O\in \GL_N(\Z)$ then the multiplication by $\alpha$ will be represented by $O^{-1}AO$.
Hence we have a well defined map $I\to [A]_{\sim_\Z}$.
\begin{thm}
\label{thm:matrixconj}
 The association $\Phi:I\mapsto [A(I,\bar w)]_{\sim_\Z}$ induces a bijection
 \begin{align*}
 \tilde\Phi: \faktor{\cL(R,K)}{\simeq_R} & \longrightarrow \faktor{\cM_{m,c}(\Z)}{\sim_\Z},\\
	      \isoclass{I}  		 & \longmapsto [A(I,\bar w)]_{\sim_\Z}
 \end{align*}
 where $\simeq_R$ denotes isomorphisms of $R$-modules.
\end{thm}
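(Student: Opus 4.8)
The plan is to show that $\tilde\Phi$ is well-defined, surjective, and injective, exploiting the classical Latimer--MacDuffee correspondence philosophy: an integral matrix $A\in\cM_{m,c}(\Z)$ acting on $\Z^N$ makes $\Z^N$ into a module over $\Z[x]/(m)\cong R_0$, hence — via the diagonal embedding $\Delta$ — into an $R$-module inside $K$, provided we can identify $\Z^N\otimes\Q$ with $K$ as $R_0\otimes\Q=\prod_i K_i$-modules. First I would check well-definedness: if $\bar w$ and $\bar w'$ are two $\Z$-bases of the same lattice $I$, the change of basis lies in $GL_N(\Z)$ and conjugates $A(I,\bar w)$ to $A(I,\bar w')$; and if $I\simeq_R J$, Corollary~\ref{cor:idemodprinc} (or rather its analogue for lattices, Lemma~\ref{lemma:isofracideals}) gives $J=\beta I$ for a unit $\beta$ in $K^\times$, so multiplication-by-$\beta$ carries a $\Z$-basis of $I$ to one of $J$ and intertwines the two multiplication-by-$\alpha$ maps; hence the induced matrices are $GL_N(\Z)$-conjugate. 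This shows $\Phi$ factors through $\isoclass{I}$ to give $\tilde\Phi$.

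Next I would construct the inverse. Given $A\in\cM_{m,c}(\Z)$, regard $M=\Z^N$ (column vectors) as a module over $\Z[y]/(c(y))$ by letting $y$ act as $A$; since the minimal polynomial of $A$ is $m$, this action factors through $\Z[y]/(m(y))\cong R_0$, so $M$ is an $R_0$-module, finite free of rank $N$ over $\Z$. Tensoring with $\Q$, $M\otimes\Q$ is a module over $R_0\otimes\Q=\prod_i K_i$; the characteristic polynomial of $A$ being $c=\prod_i f_i^{e_i}$ forces the $K_i$-dimension of $M\otimes\Q$ to be exactly $e_i$, so $M\otimes\Q\cong\prod_i K_i^{e_i}=K$ as $\prod_i K_i$-modules, where $K_i$ acts on $K_i^{e_i}$ diagonally — i.e. through $\Delta_i$. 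Fixing such an isomorphism identifies $M$ with a full $R$-lattice $I\subset K$, well-defined up to the choice of isomorphism, which amounts precisely to scaling by an element of $\mathrm{Aut}_{\prod K_i}(K)=(\prod_i K_i)^\times$; since this is exactly the ambiguity defining $\simeq_R$ on lattices (multiplication by a unit of the total quotient ring), we get a well-defined class $\isoclass{I}\in\cL(R,K)/\simeq_R$. One then verifies that, choosing the standard basis of $\Z^N$ as the $\Z$-basis of $I$, the matrix of multiplication-by-$\alpha$ is $A$ again, so this assignment inverts $\tilde\Phi$ on representatives; conversely, starting from $I$ with basis $\bar w$ and forming $A(I,\bar w)$, the module $\Z^N$ with $A$-action is $R$-isomorphic to $I$ by sending the standard basis to $\bar w$. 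This gives a two-sided inverse, hence bijectivity.

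The step I expect to be the main obstacle is the identification $M\otimes\Q\cong K$ as $\prod_i K_i$-modules together with the careful bookkeeping of which ambiguity corresponds to which equivalence. A priori, $M\otimes\Q$ is a faithful module over the semisimple $\Q$-algebra $\prod_i K_i$ on which each $K_i$ acts; by semisimplicity it is a direct sum of copies of the simple modules $K_i$, and the multiplicity of $K_i$ is read off from the characteristic polynomial of $A$, which is $f_i^{e_i}$ on the $f_i$-primary part — giving multiplicity $e_i$. One must check that ``multiplication by $\alpha=\Delta(x\bmod m)$'' on $K=\prod_i K_i^{e_i}$ indeed has characteristic polynomial $c$ and minimal polynomial $m$, so that the target data matches; this is where $\Delta$ being the \emph{diagonal} embedding (rather than any other embedding $K_i\hookrightarrow K_i^{e_i}$) is essential, since it is what makes the characteristic polynomial $f_i^{e_i}$ and the minimal polynomial $f_i$ on the $i$-th block. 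Finally I would note that $\mathrm{Aut}_{\prod K_i\text{-mod}}(K)$ need not be all of $\prod_i GL_{e_i}(K_i)$ but, because we insist the isomorphism $M\otimes\Q\to K$ be compatible with the already-fixed $R$-module structures (so that the identification of $M$ with a lattice is an identification of $R$-modules), the residual freedom is exactly scaling by $(\prod_i K_i)^\times$ acting diagonally, matching the definition of $\simeq_R$; pinning this down precisely is the delicate point of the argument.
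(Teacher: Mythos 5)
Your overall strategy coincides with the paper's: prove well-definedness, then build an explicit two-sided inverse by turning $\Z^N$ into an $R_0$-module via $A$ and identifying $\Z^N\otimes\Q$ with $K$ as a $\prod_i K_i$-module (the paper phrases this identification concretely via a choice of eigenvectors, but it is the same semisimple decomposition). However, the step you yourself flag as delicate contains a genuine error. You assert that an $R$-isomorphism of lattices in $K$ is multiplication by an element of $K^\times$ (invoking an ``analogue for lattices'' of Lemma \ref{lemma:isofracideals}) and, correspondingly, that $\mathrm{Aut}_{\prod_i K_i}(K)=(\prod_i K_i)^\times$. Both claims are false as soon as some $e_i>1$: Lemma \ref{lemma:isofracideals} uses in an essential way that $I\otimes\Q$ equals the \emph{total quotient ring} of $R$, whereas here $I\otimes\Q=K=\prod_i K_i^{e_i}$ is strictly larger than $\prod_i K_i$, and the automorphism group of $K$ as a $\prod_i K_i$-module is the block-diagonal group $\prod_i \GL_{e_i}(K_i)$, not $(\prod_i K_i)^\times$. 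Concretely, for $m=x$, $c=x^2$, so that $R=\Z$ and $K=\Q^2$, the lattices $\Z^2$ and $\Z(2,0)\oplus\Z(1,1)$ are isomorphic as $R$-modules but are not related by componentwise multiplication by any element of $(\Q^2)^\times$.

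This error leaves two holes. First, your well-definedness argument for $\tilde\Phi$ only covers isomorphisms that happen to be multiplications, so $\tilde\Phi$ is not shown to be constant on all of $\isoclass{I}$; the repair (and the paper's argument) is to take an arbitrary $R$-isomorphism $\vphi:I\to J$, observe that $\vphi(\bar w)$ is a $\Z$-basis of $J$, and that $R$-linearity alone forces $A(I,\bar w)=A(J,\vphi(\bar w))$. Second, in the inverse construction the residual freedom in choosing the isomorphism $M\otimes\Q\cong K$ is all of $\prod_i\GL_{e_i}(K_i)$, strictly larger than what you claim; the construction is nevertheless well defined up to $\simeq_R$, but for a different reason than the one you give, namely that every element of $\prod_i\GL_{e_i}(K_i)$ acts as an $R$-linear automorphism of $K$ and therefore carries the lattice to an $R$-isomorphic one (this is exactly the role of the block-diagonal matrix $C$ in the paper's proof). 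With these two corrections your argument becomes the paper's proof.
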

\begin{proof}
 First we prove that the map $\tilde\Phi$ is well defined, that is that if $\vphi:I\rightarrow J$ is an $R$-linear isomorphism then $\Phi(I)=\Phi(J)$.
 Let $\bar w$ be a $\Z$-basis of $I$ and $\vphi(\bar w)$ the induced $\Z$-basis of $J$.
 Since $\vphi$ is $R$-linear, we have that $A(I,\bar w) = A(J,\vphi(\bar w))$, which implies that $\Phi(I)=\Phi(J)$.
 
 We now prove that $\tilde\Phi$ is injective. Let $I$ and $J$ be in $\cL(R,K)$ and fix a $\Z$-basis, say
 \[ I=w_1\Z\oplus\ldots\oplus w_N\Z \]
 and
 \[ J=v_1\Z\oplus\ldots\oplus v_N\Z. \]
 Assume that $\Phi(I)=\Phi(J)$, that is $A(I,\bar w)= O^{-1} A(J,\bar v)O $ for some $O\in \GL_N(\Z)$.
 By acting with $O^{-1}$ 
 on $\bar v$ we find a new $\Z$-basis $\bar v'$ for $J$ such that $A(I,\bar w)= A(J,\bar v')$.
 Now the $\Z$-linear bijection $I\to J$ defined by $w_i\mapsto v'_i$ commutes with multiplication by $\alpha$, since the matrices representing the operation with respect to $\bar w$ and $\bar v'$ are the same, and hence it is an $R$-linear isomorphism.
 Therefore $\isoclass{I}=\isoclass{J}$ and $\tilde \Phi$ is injective.
 
 To conclude we need to prove that $\tilde \Phi$ is also surjective.
 We will do this by explicitly producing a map
 \[ \Psi:\cM_{m,c}(\Z) \to \cL(R,K)/\simeq_R\]
 which descends to a section $\tilde \Psi$ of $\tilde\Phi$.
 Let $A$ be a matrix in $\cM_{m,c}(\Z)$.
 Note that since $m$ is square-free, $A$ is semisimple.
 Denote the element $(x \bmod f_i)$ of $K_i$ by $\alpha_i$.
 Note that
 \[ \alpha=(\underbrace{\alpha_1,\ldots,\alpha_1}_{\text{$e_1$ times}},\ldots,\underbrace{\alpha_r,\ldots,\alpha_r}_{\text{$e_r$ times}} ). \]
 Let
 \begin{equation}
 \label{eq:eigenvectors}
 v_{i,1},\ldots,v_{i,e_i} \in K_i^N 
 \end{equation}
 be a basis of the eigenspace corresponding to $\alpha_i$,
 that is, linearly independent vectors such that
 \[A v_{i,j_i}=\alpha_iv_{i,j_i}  \]
 for each $i=1,\ldots, r$ and $j_i=1,\ldots, e_i$.
 Let $E=e_1+\ldots +e_r$ and consider the $E \times N$ matrix whose rows are the vectors $v_{i,j_i}$ and denote by $w_k$ the $k$-th column, for $k=1,\ldots,N$.
 Observe that each $w_k$ is an element of $K$ and define
 \[I=\Span{ w_1,\ldots,w_N }_\Z \subset K.\]
 If $A=(a_{h,k})$ and $v_{i,j_i}=( v_{i,j_i}^{(1)},\ldots,v_{i,j_i}^{(N)} )$ then
 \[w_k=(v_{1,1}^{(k)},\ldots,v_{1,e_1}^{(k)},\ldots,v_{r,1}^{(k)},\ldots,v_{r,e_r}^{(k)})\]
 and it follows that
 \begin{align}
 \label{eq:multbyalpha}
 \begin{split}
  \alpha w_k & = ( \alpha_1 v_{1,1}^{(k)},\ldots,\alpha_1 v_{1,e_1}^{(k)},\ldots , \alpha_r v_{r,1}^{(k)},\ldots,\alpha_r v_{r,e_r}^{(k)} ) \\
	     & = ( \sum_{h=1}^N a_{k,h}v_{1,1}^{(h)},\ldots,\sum_{h=1}^N a_{k,h} v_{1,e_1}^{(h)},\ldots , \sum_{h=1}^N a_{k,h} v_{r,1}^{(h)},\ldots,\sum_{h=1}^N a_{k,h} v_{r,e_r}^{(h)} ) \\
	     & = \sum_{h=1}^N a_{k,h} (v_{1,1}^{(h)},\ldots,v_{1,e_1}^{(h)},\ldots,v_{r,1}^{(h)},\ldots,v_{r,e_r}^{(h)}) \\
	     & = \sum_{h=1}^N a_{k,h} w_h \in I,
 \end{split}     
 \end{align}     
 which implies that $I$ is closed under multiplication by $\alpha$, and hence it is an $R$-module.
 Moreover, \eqref{eq:multbyalpha} means that the multiplication by $\alpha$ is represented by the matrix $A$ with respect to the generators $w_1,\ldots,w_N$. 
 We prove now that $I$ is a full lattice, or equivalently that the $\Q$-vector space $V=I\otimes_\Z \Q$ equals $K$.
%
%
 Note that $A$ represents the $\Q$-linear map induced by multiplication by $\alpha$ on $V$.
 Since $A$ is semisimple there is a decomposition 
 \begin{equation}
  \label{eq:decsimplefactors}
  V=W_1\oplus \ldots \oplus W_r
 \end{equation}
 into $\Q$-vector spaces which are stable under the action of $\alpha$, and possibly after renumbering we can assume that $A|_{W_i}$ has minimal polynomial $f_i$ and hence that $W_i$ is a $K_i$-vector space.
 For each $i$, since the vectors $v_{i,1},\ldots,v_{i,e_i}$ are linearly independent over $K_i$, we see that $W_i$ must have dimension $e_i$.
 This concludes the proof that $I\in \cL(R,K)$.
 
 Observe that the construction of $I$ depends on the choice of eigenvectors in \eqref{eq:eigenvectors}.
 A different choice is attained by the action of a block-diagonal matrix $C$ in 
 \[
 \begin{pmatrix}
    \GL_{e_1}(K_1) & 0 & \ldots & 0 \\
    0 & \GL_{e_2}(K_2) & \ldots & 0 \\
    \vdots & & \ddots & \vdots\\
    0 & 0 & \ldots & \GL_{e_r}(K_r)
 \end{pmatrix}
 \]
 Observe that $C$ induces an $R$-linear automorphism of $K$ and hence its action on $I$ will also be an $R$-isomorphic object of $\cL(R,K)$.
 Hence we have a well defined map $\Psi$ which associates $A\mapsto \isoclass{I}$.
 
 If instead of $A$ we take a conjugate matrix $B$, it will reflect as taking an invertible $\Z$-linear combination of the eigenvectors in \eqref{eq:eigenvectors}.
 Clearly this will not change the $\Z$-span that they generate, that is, the lattice $I$, and hence $\Psi$ descents to a well defined map
 \[ \tilde\Psi: \faktor{\cM_{m,c}(\Z)}{\sim_\Z} \longrightarrow \faktor{\cL(R,K)}{\simeq_R} \]
 which by construction is a section of $\tilde \Phi$.
 This implies that $\tilde \Phi$ is surjective and concludes the proof.
\end{proof}
In general, the set of $R$-isomorphism classes in $\cL(R,K)$ is hard to handle, but under certain assumptions we can reduce it to an ideal class monoid computation.

\begin{cor}
\label{cor:computeconjclass}
 Let $f$ be a square-free monic integral polynomial and put $R=\Z[x]/(f)$.
 \begin{enumerate}[(a)]
  \item \label{cor:matrices:item1} There is a bijection
 \[
 \faktor{\cM_{f,f}(\Z)}{\sim_\Z}
 \longleftrightarrow
 \ICM(R).
 \]
 \item \label{cor:matrices:item2} Assume that $R$ is a Bass order. Let $N$ be a positive integer. We have a bijection
  \[
 \faktor{\cM_{f,f^N}(\Z)}{\sim_\Z}
 \longleftrightarrow
 \faktor{\cC}{\simeq_R},
 \]
 where the objects of $\cC$ are $R$-modules of the form $I_1\oplus\ldots \oplus I_N$, with $I_i$ fractional $R$-ideals satisfying $(I_i:I_i)\subseteq (I_{i+1}:I_{i+1})$, and two such modules $I_1\oplus\ldots \oplus I_N$ and $I'_1\oplus\ldots \oplus I'_N$ are isomorphic if and only if $(I_i:I_i)=(I'_i:I'_i)$ for every $i$ and $\isoclass{I_1\cdot \ldots\cdot I_N}=\isoclass{I'_1\cdot \ldots\cdot I'_N}$ in $\ICM(R)$.
 \end{enumerate}
\end{cor}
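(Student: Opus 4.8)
The plan is to deduce the corollary from Theorem \ref{thm:matrixconj} by analyzing the structure of the lattices in $\cL(R,K)$ in the two special cases. For part \ref{cor:matrices:item1}, when $m=c=f$ we have $e_1=\ldots=e_r=1$, so $K=\prod_i K_i$ is the total quotient ring of $R=\Z[x]/(f)$ itself, and $R$ is an order in $K$. A full lattice in $K$ that is an $R$-module is then exactly a fractional $R$-ideal, so $\cL(R,K)=\cI(R)$, and $R$-isomorphism of such lattices is precisely the relation $\simeq$ defining $\ICM(R)$ by Corollary \ref{cor:idemodprinc}. Hence Theorem \ref{thm:matrixconj} gives the claimed bijection directly; this part is essentially a translation of definitions.

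For part \ref{cor:matrices:item2}, now $c=f^N$ and $m=f$, so each $e_i=N$ and $K=\prod_i K_i^N = \left(\prod_i K_i\right)^N = K_0^N$ where $K_0=\prod_i K_i$ is the total quotient ring of $R$. The first step is to show that any full $R$-lattice $I$ in $K_0^N$ decomposes as a direct sum of fractional $R$-ideals, i.e. $I\cong I_1\oplus\ldots\oplus I_N$ for some $I_j\in\cI(R)$. This is where the Bass hypothesis enters: over a Bass order every finitely generated torsion-free module is a direct sum of ideals (this is one of the standard equivalent characterizations of Bass orders, cf. \cite[Theorem 2.1]{LevyWiegand85}; since $K_0^N = K_0\otimes_{K_0}K_0^N$ and $I$ is $R$-torsion-free of rank $N$, it splits as $N$ ideals). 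I would invoke this structure theorem rather than reprove it. The second step is to pin down the isomorphism invariants of such a direct sum: again by Bass (Steinitz-type theorem for direct sums of ideals over Bass/hereditary-like orders), $I_1\oplus\ldots\oplus I_N\cong I_1'\oplus\ldots\oplus I_N'$ as $R$-modules if and only if the multisets of multiplicator rings coincide and the product classes agree, $\isoclass{I_1\cdots I_N}=\isoclass{I_1'\cdots I_N'}$ in $\ICM(R)$. Imposing the normalization $(I_j:I_j)\subseteq(I_{j+1}:I_{j+1})$ fixes an ordering of the multiplicator rings, turning "same multiset" into "$(I_j:I_j)=(I_j':I_j')$ for all $j$", which matches the stated equivalence on $\cC$.

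The final step is to check that the map $\cL(R,K)/\simeq_R \to \cC/\simeq_R$ sending $I$ to its ordered direct-sum decomposition is well defined (independence of the choice of decomposition follows from the invariants just described) and bijective (surjectivity is clear since any $I_1\oplus\ldots\oplus I_N$ with the ordering condition is a full $R$-lattice in $K_0^N$, and injectivity is exactly the invariant characterization), and then compose with $\tilde\Phi$ from Theorem \ref{thm:matrixconj}. I expect the main obstacle to be the second step: verifying that the product of the ideal classes (together with the tuple of multiplicator rings) is a \emph{complete} isomorphism invariant for direct sums of ideals over a Bass order. For maximal orders this is the classical Steinitz theorem; over Bass orders one must account for the stratification by over-orders, and the cleanest route is to reduce to the genus-local situation and patch, using that each over-order is Gorenstein so that locally all the relevant ideals become principal (Lemma \ref{lemma:invertilbeprincipal}). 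I would either cite this from the literature on Bass orders or sketch the local-global argument, being careful that the over-order constraint $(I_j:I_j)\subseteq(I_{j+1}:I_{j+1})$ is compatible with the patching.
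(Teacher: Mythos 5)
Your proposal is correct and follows essentially the same route as the paper: part (a) is the specialization of Theorem \ref{thm:matrixconj} to $m=c=f$, where $\cL(R,K)$ becomes $\cI(R)$, and part (b) is obtained by combining Theorem \ref{thm:matrixconj} with the Levy--Wiegand classification of finitely generated torsion-free modules over Bass orders (the paper cites \cite[Theorem 7.1]{LevyWiegand85} for exactly the decomposition into ideals and the complete invariant --- ordered chain of multiplicator rings plus the class of the product --- that you describe). The only difference is that the paper's proof is a two-line citation, whereas you spell out what the cited theorem must provide.
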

\begin{proof}
   Part \ref{cor:matrices:item1} follows from the equality $ \ICM(R) = \cL(R,R\otimes \Q)/\simeq_R $ proved in Theorem \ref{thm:matrixconj} and part \ref{cor:matrices:item2} is a direct consequence of the classification given in \cite[Theorem 7.1]{LevyWiegand85}.
\end{proof}
Observe that Corollary \ref{cor:computeconjclass} provides an algorithm to check whether two matrices, with characteristic polynomial of the appropriate form, are conjugate over $\Z$. 
This problem is solved in greater generality by a new algorithm proposed in \cite{EickHofmannOBrien19}.
In Example \ref{ex:timings} and Table \ref{table:timings} we compare the running time with our algorithm for matrices with square-free characteristic polynomial.

\begin{example}
   Let $f=f_1f_2$, $K$, $\alpha$ and $R$ be defined as in Example \ref{ex:prodnotBass}.
   Put $\alpha_1=x \mod f_1$ and $\alpha_2=x \mod f_2$ so that $\alpha = (\alpha_1,\alpha_2)$.
   Furthermore denote by $1_1$ and $1_2$ the images of the unit elements of $K_1$ and $K_2$ respectively under the canonical isomorphism 
   $K\simeq K_1\times K_2$.
   Define
   \[ \beta_1=(1_1,0),\quad\beta_2=(\alpha_1,0),\quad\beta_3=(0,1_2),\quad\beta_4=(0,\alpha_2),\quad\beta_5=(0,\alpha_2^2). \]
   Observe that
   \[\cA=\set{1,\alpha,\alpha^2,\alpha^3,\alpha^4}\]
   and
   \[\cB=\set{ \beta_1,\beta_2, \beta_3, \beta_4, \beta_5 }\]
   are two bases of $K$ over $\Q$. 
   Consider the ideal $I$ given by
   {\footnotesize \begin{equation*}
   \begin{split}
      I & = \dfrac{1}{444}\left(-13-46\alpha-138\alpha^2-50\alpha^3+7\alpha^4\right)\Z\oplus\dfrac{1}{222}\left(-9-29\alpha-87\alpha^2-9\alpha^3+2\alpha^4\right)\Z\oplus\\
        & \oplus\dfrac{1}{888}\left(883-12\alpha-36\alpha^2+32\alpha^3-3\alpha^4\right)\Z\oplus\dfrac{1}{888}\left(57+1084\alpha+588\alpha^2+168\alpha^3-25\alpha^4\right)\Z\oplus\\
        & \oplus\dfrac{1}{444}\left(190-99\alpha-75\alpha^2+5\alpha^3+3\alpha^4\right)\Z,
   \end{split}
   \end{equation*}}
   or equivalently
   {\footnotesize 
   \[ I = -2\beta_1\Z \oplus (\beta_1+\beta_2)\Z \oplus \frac12(5\beta_1+\beta_2+\beta_3)\Z 
   \oplus\frac12(5\beta_1+\beta_2+\beta_4)\Z \oplus \frac12(3\beta_1+\beta_2+\beta_3+\beta_5)\Z. \]
   }
   With respect to this $\Z$-basis of $I$, the multiplication by $\alpha$ is represented by the following integral matrix   
   \[A=\begin{pmatrix}
      -1 &  2 & 3 &  2 &  4 \\
      -2 & -3 & 0 &  0 & -4 \\
       0 &  0 & 0 & -1 & -4 \\
       0 &  0 & 1 &  0 &  2 \\
       0 &  0 & 0 &  2 &  9
   \end{pmatrix}.\]
   Performing an LLL reduction of the $\Z$-basis of $I$, we get
   {\footnotesize \begin{equation*}
      \begin{split}
         I & = \frac12(\beta_1-\beta_2+\beta_3+\beta_5)\Z\oplus\frac12(-\beta_1-\beta_2+2\beta_3)\Z
	   \oplus \frac12(-\beta_1+\beta_2+\beta_3+\beta_5)\Z \oplus \\ & \oplus\frac12(\beta_1+\beta_2+2\beta_3)\Z\oplus\frac12(-\beta_1-\beta_2+2\beta_4)\Z
      \end{split}
   \end{equation*}}
   and with respect to this $\Z$-basis of $I$ the multiplication by $\alpha$ is represented by
    \[A'=\begin{pmatrix}
	5 & 1 & 4 & -1 & 2 \\
	-6 & -3 & 0 & 2 & -3 \\
	4 & -1 & 5 & 1 & 0 \\
	2 & 3 & -4 & -2 & 2 \\
	2 & 1 & 2 & 1 & 0
    \end{pmatrix}.\]
    We find that for
    \[U=\begin{pmatrix}
	0 & 1 & 1 & 1 & 1\\
	-1 &-1 & 0 & 0 &-1\\
	0 & 1 & 0 & 1 & 0\\
	0 & 0 & 0 & 0 & 1\\
	1 & 0 & 1 & 0 & 0
    \end{pmatrix},\]
    we have
    \[A' =U^{-1}AU.\]
    We now follow the proof of Theorem \ref{thm:matrixconj} and we construct the ideal associated to the matrix $A$.
    The eigenvectors corresponding to the eigenvalues $\alpha_1$ and $\alpha_2$ are respectively
    \[ \left(1_1,\frac12(-\alpha_1 - 1_1),\frac14(-\alpha_1 - 5\cdot 1_1),\frac14(-\alpha_1 - 5\cdot 1_1),\frac14(-\alpha_1 - 3\cdot 1_1)\right)  \]
    and
    \[ \left(0,0,1,\alpha_2,\frac12(\alpha_2^2 + 1_2)\right). \]
    Hence we obtain
    \begin{align*}
       & w_1=\beta_1,\quad w_2=-\frac12(\beta_1+\beta_2),\quad w_3=\frac14(-5\beta_1 -\beta_2)+\beta_3,\\
       & w_4=\frac14(-5\beta_1 -\beta_2)+\beta_4,\quad w_5=\frac14(-3\beta_1 -\beta_2)+\frac12(\beta_3+\beta_5)
    \end{align*}
    and we put
    \[ J=\Span{w_1,w_2,w_3,w_4,w_5}_\Z. \]
    We find that $I$ and $J$ are isomorphic. More precisely, we have
    \[ (-2\beta_1+28\beta_4-3\beta_5)J = I. \]
\end{example}
\begin{example}
\label{ex:timings}
We compare the running time of our algorithm and the one in {\protect\cite{EickHofmannOBrien19}} to test conjugacy of integral matrices with square-free characteristic polynomial.
Table \ref{table:timings} contains the results of our computation on a Intel Xeon CPU E5-2697 v2 running at 2.70GHz.
The entry of the $k$-th row and $d$-th column consists of the pair $(t_1,t_2)$, where $t_1$ and $t_2$ are the average running times of our algorithm and the one in {\protect\cite{EickHofmannOBrien19}}, respectively, on $100$ pairs $(M_1,M_2)$ of $d\times d$ integer matrices built using the following proedure.
Pick a monic square-free polynomial $f\in \Z[x]$ with random coefficients with absolute value bounded by $k$ and let $C$ be companion matrix of $f$. 
Construct a random matrix $R_1$ in $\GL_d(\Z)$ by multiplying $10$ matrices of the form $I_d+E$, where $I_d$ is the identity in $\GL_d(\Z)$ and $E$ has exactly one non-zero entry, which is off the diagonal and has absolute value at most $k$.
Set $M_1=R_1^{-1}CR_1$.
Build $M_2$ in an analogous way. 
The characteristic polynomial of both $M_1$ and $M_2$ is $f$, so they correspond to fractional ideals of $\Z[x]/f$, see Corollary \ref{cor:computeconjclass}.\ref{cor:matrices:item1}.

\begin{table}[h]
\scalebox{.9}{
\begin{tabular}{cccccc}
\hline 
		  & $d=4$ & $d=6$ & $d=8$ & $d=10$ \\\hline
$k=20$  & $(0.69,1.2)$  & $(1.5,3.0)$   & $(3.2,9.6)$   & $(7.7,39.2)$  \\\hline
$k=40$  & $(0.75,1.2)$  & $(1.5,5.2)$   & $(3.6,16.3)$  & $(15.9,125.0)$
\\\hline
$k=80$  & $(0.69,1.6)$  & $(1.9,38.8)$  & $(6.9,80.9)$  & $(74.6,669.9)$
\\\hline
$k=160$ & $(0.69,5.2)$  & $(2.0,104.9)$ & $(26.2,294.2)$        & $(-,-)$
\\\hline
$k=320$ & $(0.75,33.2)$ & $(2.9,-)$     & $(53.9,-)$    & $(-,-)$       \\\hline
$k=640$ & $(0.81,127.7)$        & $(21.8,-)$    & $(238.2,-)$   & $(-,-)$
\\\hline
\end{tabular}
}	    
\caption{
Running-time comparison, see Example \ref{ex:timings}.
 }
\label{table:timings}
\end{table}
\end{example}
\begin{remark}
	Corollary \ref{cor:computeconjclass} allows us to produce representatives of the conjugacy classes of integral matrices with given characteristic polynomial (satisfying certain assumptions), solving \cite[Problem 7.7]{EickHofmannOBrien19} for such characteristic polynomials.
	Moreover, it gives an answer to the conjugacy problem over the integers, that is to determine whether two integral matrices $A$ and $B$ with square-free characteristic polynomial are conjugate.
   This was already considered in \cite{Gru80} where the author performs a series of reductions in order to translate the problem into an isomorphism test between fractional ideals of an integral domain.
   In this process the author has made a mistake, namely, the morphism $(3)$ on page $107$ is not a bijection, since the injective map from $R$ to the product of the monogenic orders is not surjective in general, which could lead to a very different output as Example \ref{ex:prodnotBass} shows.
\end{remark}

\bibliographystyle{amsalpha}

\begin{thebibliography}{Mar18b}

\bibitem[Bas63]{basshy63}
Hyman Bass, \emph{On the ubiquity of {G}orenstein rings}, Math. Z. \textbf{82}
  (1963), 8--28. \MR{0153708 (27 \#3669)}

\bibitem[BCP97]{Magma}
Wieb Bosma, John Cannon, and Catherine Playoust, \emph{The {M}agma algebra
  system. {I}. {T}he user language}, J. Symbolic Comput. \textbf{24} (1997),
  no.~3-4, 235--265. \MR{MR1484478}

\bibitem[BL94]{buchlenstra}
J.~Buchmann and H.~W. Lenstra, Jr., \emph{Approximating rings of integers in
  number fields}, J. Th\'eor. Nombres Bordeaux \textbf{6} (1994), no.~2,
  221--260. \MR{1360644 (96m:11092)}

\bibitem[Coh93]{cohen93}
Henri Cohen, \emph{A course in computational algebraic number theory}, Graduate
  Texts in Mathematics, vol. 138, Springer-Verlag, Berlin, 1993. \MR{1228206}

\bibitem[DCD00]{delcdvor00}
Ilaria Del~Corso and Roberto Dvornicich, \emph{Relations among discriminant,
  different, and conductor of an order}, J. Algebra \textbf{224} (2000), no.~1,
  77--90. \MR{1736694 (2000m:11114)}

\bibitem[DTZ62]{dadetz62}
E.~C. Dade, O.~Taussky, and H.~Zassenhaus, \emph{On the theory of orders, in
  particular on the semigroup of ideal classes and genera of an order in an
  algebraic number field}, Math. Ann. \textbf{148} (1962), 31--64. \MR{0140544
  (25 \#3962)}

\bibitem[EHO19]{EickHofmannOBrien19}
Bettina Eick, Tommy Hofmann, and Eamonn O'Brien, \emph{The conjugacy problem in
  {${\bf GL}(n,{\bf Z})$}}, J. Lond. Math. Soc. (2) \textbf{00} (2019), 1--26.

\bibitem[Gil92]{Gilmer92}
Robert Gilmer, \emph{Multiplicative ideal theory}, Queen's Papers in Pure and
  Applied Mathematics, vol.~90, Queen's University, Kingston, ON, 1992,
  Corrected reprint of the 1972 edition. \MR{1204267}

\bibitem[Gru80]{Gru80}
Fritz~J. Grunewald, \emph{Solution of the conjugacy problem in certain
  arithmetic groups}, Word problems, {II} ({C}onf. on {D}ecision {P}roblems in
  {A}lgebra, {O}xford, 1976), Stud. Logic Foundations Math., vol.~95,
  North-Holland, Amsterdam-New York, 1980, pp.~101--139. \MR{579942}

\bibitem[Hel40]{Hel40}
Olaf Helmer, \emph{Divisibility properties of integral functions}, Duke Math.
  J. \textbf{6} (1940), 345--356. \MR{0001851}

\bibitem[HS19]{HofmannSircana19}
Tommy Hofmann and Carlo Sircana, \emph{On the computation of overorders},
  arXiv:1909.10860, 2019.

\bibitem[Hus16]{HusertPhDThesis17}
David Husert, \emph{Similarity of integer matrices}, University of Paderborn,
  2016, PhD Thesis.

\bibitem[JP16]{JP16}
Bruce~W. Jordan and Bjorn Poonen, \emph{The analytic class number formula for
  orders in products of number fields}, arXiv:1604.04564v1, 2016.

\bibitem[Kap49]{Ka49}
Irving Kaplansky, \emph{Elementary divisors and modules}, Trans. Amer. Math.
  Soc. \textbf{66} (1949), 464--491. \MR{0031470}

\bibitem[KP05]{klupau05}
J{\"u}rgen Kl{\"u}ners and Sebastian Pauli, \emph{Computing residue class rings
  and {P}icard groups of orders}, J. Algebra \textbf{292} (2005), no.~1,
  47--64. \MR{2166795 (2006f:11142)}

\bibitem[LM33]{LaClMD33}
Claiborne~G. Latimer and C.~C. MacDuffee, \emph{A correspondence between
  classes of ideals and classes of matrices}, Ann. of Math. (2) \textbf{34}
  (1933), no.~2, 313--316. \MR{1503108}

\bibitem[LW85]{LevyWiegand85}
Lawrence~S. Levy and Roger Wiegand, \emph{Dedekind-like behavior of rings with
  {$2$}-generated ideals}, J. Pure Appl. Algebra \textbf{37} (1985), no.~1,
  41--58. \MR{794792}

\bibitem[Mar18a]{MarPhDThesis18}
Stefano Marseglia, \emph{Computing abelian varieties over finite fields},
  Stockholm University, 2018.

\bibitem[Mar18b]{MarAbVar18}
\bysame, \emph{Computing isomorphism classes of square-free polarized abelian
  varieties over finite fields}, arXiv:1805.10223, 2018.

\bibitem[Rei70]{Reiner70}
Irving Reiner, \emph{A survey of integral representation theory}, Bull. Amer.
  Math. Soc. \textbf{76} (1970), 159--227. \MR{0254092}

\bibitem[Rei03]{Reiner03}
\bysame, \emph{Maximal orders}, London Mathematical Society Monographs. New
  Series, vol.~28, The Clarendon Press, Oxford University Press, Oxford, 2003.
  \MR{1972204}

\bibitem[Ste08]{psh08}
Peter Stevenhagen, \emph{The arithmetic of number rings}, Algorithmic number
  theory: lattices, number fields, curves and cryptography, Math. Sci. Res.
  Inst. Publ., vol.~44, Cambridge Univ. Press, Cambridge, 2008, pp.~209--266.
  \MR{2467548 (2009k:11213)}

\bibitem[Tau49]{taussky49}
Olga Taussky, \emph{On a theorem of {L}atimer and {M}ac{D}uffee}, Canadian J.
  Math. \textbf{1} (1949), 300--302. \MR{0030491}

\bibitem[ZZ94]{zz94}
P.~Zanardo and U.~Zannier, \emph{The class semigroup of orders in number
  fields}, Math. Proc. Cambridge Philos. Soc. \textbf{115} (1994), no.~3,
  379--391. \MR{1269926 (95d:11159)}

\end{thebibliography}
\renewcommand{\bibname}{References} 
\def\cprime{$'$}
\providecommand{\bysame}{\leavevmode\hbox to3em{\hrulefill}\thinspace}
\providecommand{\MR}{\relax\ifhmode\unskip\space\fi MR }
\providecommand{\MRhref}[2]{%
  \href{http://www.ams.org/mathscinet-getitem?mr=#1}{#2}
}
\providecommand{\href}[2]{#2}

\end{document}